\documentclass[11pt]{amsart}
\usepackage{amssymb}

\newtheorem{thm}{Theorem}
\newtheorem{prop}[thm]{Proposition}
\newtheorem{lem}[thm]{Lemma}

\theoremstyle{remark}

\theoremstyle{definition}

\newcommand{\col}{\kern -3pt :}
\newcommand{\C}{\mathbb C}
\newcommand{\R}{\mathbb R}

\newcommand{\N}{\mathbb N}

\newcommand{\HH}{\mathbb H}
\newcommand{\SSS}{\mathbb S}

\newcommand{\T}{\mathcal T}

\newcommand{\E}{\mathrm{e}}
\newcommand{\I}{\mathrm{i}}

\renewcommand{\leq}{\leqslant}
\renewcommand{\geq}{\geqslant}
\renewcommand{\phi}{\varphi}

\title[Infinitesimal Liouville currents]{Infinitesimal Liouville currents, cross-ratios and intersection numbers}

\author{Francis Bonahon}
\address {F.B.: Department
of Mathematics,  University of
Southern California, Los Angeles
CA~90089-2532, U.S.A.}
\email{fbonahon@math.usc.edu}

\author{Dragomir \v Sari\'c}
\address{D.S.: Department
of Mathematics, Queens College, City University of New York, Flushing NY~11367, U.S.A.}
\address{D.S.: Mathematics Ph.D. Program, 
The CUNY Graduate Center, 
365 Fifth Avenue, 
Room 4208, 
New York NY 10016-4309, U.S.A.}
\email{dragomir.saric@qc.cuny.edu}

\date{\today}

\thanks{This research was partially supported by the grant  DMS-0604866 from the National Science Foundation.}

\begin{document}
\maketitle

\begin{abstract}
Many classical objects on a surface $S$ can be interpreted as cross-ratio functions on the circle at infinity of the universal covering $\widetilde S$. This includes closed curves considered up to homotopy, metrics of negative curvature considered up to isotopy and, in the case of interest here, tangent vectors to the Teichm\"uller space of complex structures on  $S$. When two cross-ratio functions are sufficiently regular, they have a geometric intersection number, which generalizes the intersection number of two closed curves. In the case of the cross-ratio functions associated to tangent vectors to the Teichm\"uller space, we show that two such cross-ratio functions have a well-defined geometric intersection number, and that this intersection number is equal to the Weil-Petersson scalar product of the corresponding vectors.  
\end{abstract}

Let $S$ be a compact orientable surface of negative Euler characteristic. Its universal cover $\widetilde S$ has a well-defined \emph{circle at infinity} $\partial_\infty \widetilde S$, which can be described uniquely in terms of the topology of $S$, or even in terms of the fundamental group $\pi_1(S)$ \cite{Gromov, GhysHarpe, CDP, BridgsonHaefliger}. The action of $\pi_1(S)$ on the universal cover continuously extends to $\widetilde S \cup \partial_\infty \widetilde S$. 

A \emph{cross-ratio function} (or \emph{cross-ratio} for short) is a function $\alpha$ that associates a real number $\alpha(I_1, I_2) \in \R$ to each pair of two intervals $I$, $J\subset \partial_\infty S$ with disjoint closures, and that satisfies the following two conditions:
\begin{enumerate}
\item (Finite Additivity) $\alpha (I, J) = \alpha (I_1, J) + \alpha (I_2, J)$ whenever the interval $I$ is split as the union of two disjoint intervals $I_1$ and $I_2$;
similarly, $\alpha (I, J) = \alpha (I, J_1) + \alpha (I,J_2)$ whenever the interval $J$ is split as the union of two disjoint intervals $J_1$ and $J_2$.
\item (Invariance) $\alpha $ is invariant under the action of the fundamental group, in the sense that $\alpha \bigl( \gamma(I), \gamma(J) \bigr)=\alpha (I, J)$ for every $\gamma\in \pi_1(S)$ and every intervals $I$, $J\subset \partial_\infty \widetilde S$. 
\end{enumerate}
Here an interval is allowed to be closed, semi-open of open according to whether it includes all, some, or none of its end points, respectively. We let $\mathcal X(S)$ denote the space of all cross-ratio functions. 

Most of the cross-ratio functions that we will consider in this article will be, in addition, \emph{symmetric} in the sense that $\alpha (I, J) = \alpha(J, I)$ for every $I$ and $J$. However, this property is not crucial.

If, in addition, a cross-ratio $\alpha \in \mathcal X(S)$ takes only non-negative values, then it is countably additive and consequently defines a $\pi_1(S)$--invariant Radon measure on the space $\partial_\infty\widetilde S \times \partial_\infty\widetilde S - \Delta$, where $\Delta$ denotes the diagonal of the product. Such a measure is a \emph{measure geodesic current}. 

A fundamental example of cross-ratio function is associated to a complex structure $m$ on the surface $S$. Then, the universal covering $\widetilde S$ is biholomorphically equivalent to the open unit disk $\HH^2 \subset \C$, which provides an identification of $\partial_\infty \widetilde S$ with the circle $\SSS^1$ bounding this disk, well-defined up to a linear fractional map preserving $\HH^2$. We can then consider the \emph{Liouville geodesic current} $L_m$, defined by the property that
$$
L_m (I, J) = \left|  \log \frac{(a-c)(b-d)}{(a - d)(b-c)}  \right|
$$
if $a$, $b$ and $c$, $d$ are the end points of $I$ and $J$, respectively. This example explains the terminology, and the connection with the classical cross-ratios. See \cite[Theorem~13]{Bonahon2} for a characterization of which cross-ratios occur in this way. See also \cite{Bonahon1, Otal1, Otal3, Bonahon3, Labourie1, Labourie2, LabourieMcShane} for various incarnations of cross-ratio functions.

This article is devoted to infinitesimal versions of these Liouville cross-ratios. Let $\T(S)$ be the \emph{Teichm\"uller space} of $S$, considered as the space of isotopy classes of complex structures on $S$, and let $V\in T_{m_0} \T(S)$ be a vector tangent to $\T(S)$ at $m_0$. If $t\mapsto m_t$ is a curve in $\T(S)$ passing through $m_0$ and tangent to $V$ at $t=0$, we can consider the derivative
$$
L_V(I, J) = \frac\partial{\partial t} L_{m_t} (I, J) _{|t=0}
$$
for every pair  $I$, $J\subset \partial_\infty \widetilde S$ of intervals with disjoint closures. It is fairly well-known that the derivative exists, and depends only on $V\in T_{m_0} \T(S)$, and not on the curve $t\mapsto m_t$ tangent to $V$.  This $L_V$ is the \emph{infinitesimal Liouville cross-ratio} associated to $V\in T_{m_0}\T(S)$. 

Such an infinitesimal Liouville cross-ratio does not induce a measure on $\partial_\infty\widetilde S \times \partial_\infty\widetilde S - \Delta$ any more. It has a weaker regularity property, in the sense that it only defines a \emph{H\"older geodesic current}, namely a $\pi_1(S)$--invariant linear functional on the space of H\"older continuous functions with compact support on $\partial_\infty\widetilde S \times \partial_\infty\widetilde S - \Delta$. See \cite{BonahonSozen, Saric1, Otal2}. 

The space $\mathcal C(S)$ of measure geodesic currents on $S$ was introduced in \cite{Bonahon1}  to construct a completion of the space of homotopy classes of weighted closed curves in $S$. A fundamental feature of this space is a continuous function
$$
i \col \mathcal C(S) \times \mathcal C(S) \to \R,
$$
which extends the geometric intersection function for closed curves. The proof that this geometric intersection function $i$ is finite and continuous heavily depends on the regularity of measure geodesic currents.

For general cross-ratio functions $\alpha$, $\beta \in \mathcal X(S)$, it is formally possible to copy the above construction and attempt to define a \emph{geometric intersection number} $i(\alpha, \beta)$.  However, making sense of this intersection number amounts to proving the convergence of a certain infinite sum, and requires additional regularity hypotheses on the cross-ratios. 

The main contribution of this article is the rigorous construction of intersection numbers for a class of cross-ratios which includes infinitesimal Liouville cross-ratios.

Given $\nu>0$, a cross-ratio function $\alpha \in \mathcal X(S)$ is said to be \emph{$\nu$--H\"older regular} with respect to a complex structure $m_0 \in \T(S)$ if there exist constants $c_0$, $c_1>0$ such that
$$
\left| \alpha(I,J) \right| \leq c_0 L_{m_0}(I,J)^\nu
$$
for all intervals $I$, $J\subset \partial_\infty \widetilde S$ with disjoint closure and such that $ L_{m_0}(I, J) \leq c_1$; recall that $L_{m_0}$ denotes the Liouville geodesic current associated to the complex structure $m_0$. A H\"older regular cross-ratio function defines a H\"older geodesic current (compare \cite{Saric1}), but the converse is not true; for instance, a measure geodesic current with an atom (such as the one associated to a homotopy class of closed curves) is a H\"older geodesic current, but is not H\"older regular in the above sense. 

\begin{thm}
\label{thm:Main1}
If the cross-ratios $\alpha$, $\beta \in \mathcal X(S)$ are $\nu$--H\"older regular with respect to some complex structure $m_0 \in \T(S)$ and for some $\nu > \frac 34$, then it is possible to define a geometric intersection number $i(\alpha, \beta)$, in a sense made precise by Theorem~{\upshape \ref{thm:Main3}} below. 
\end{thm}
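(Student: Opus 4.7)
The plan is to define $i(\alpha,\beta)$ by a limiting procedure modelled on the measure-geodesic-current construction, with convergence justified by the H\"older regularity. I would fix an ideal geodesic triangulation of $S$ with respect to $m_0$; its lifts to $\widetilde S$ cut $\partial_\infty \widetilde S$ into a countable family of disjoint arcs, forming a $\pi_1(S)$-invariant partition $\mathcal{P}_0$. Iterated barycentric-type refinement then produces a nested sequence of $\pi_1(S)$-invariant partitions $\mathcal{P}_n$ of $\partial_\infty \widetilde S$ whose mesh (measured by $L_{m_0}$) tends to zero. For each $n$, set
$$
i_n(\alpha,\beta) = \sum_{[(I_1, J_1, I_2, J_2)]} \alpha(I_1, J_1)\,\beta(I_2, J_2),
$$
where the sum runs over $\pi_1(S)$-orbits of $4$-tuples of arcs in $\mathcal{P}_n$ for which the pair $(I_2, J_2)$ links the pair $(I_1, J_1)$ on $\partial_\infty \widetilde S$, so that the corresponding geodesics transversally cross. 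Cocompactness of the $\pi_1(S)$-action makes each $i_n(\alpha,\beta)$ a finite sum, and the candidate intersection number is $i(\alpha,\beta) = \lim_{n\to\infty} i_n(\alpha,\beta)$.

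The heart of the argument is convergence of $(i_n(\alpha,\beta))_n$, and this is where the hypothesis $\nu > 3/4$ enters. The H\"older regularity gives the pointwise bound
$$
\bigl|\alpha(I_1, J_1)\,\beta(I_2, J_2)\bigr| \leq c_0^2\, L_{m_0}(I_1, J_1)^\nu\, L_{m_0}(I_2, J_2)^\nu
$$
once the arcs are small enough. By finite additivity of $\alpha$ and $\beta$, the difference $i_{n+1}(\alpha,\beta) - i_n(\alpha,\beta)$ is controlled by a sum of such products over the $4$-tuples modified when passing from $\mathcal{P}_n$ to $\mathcal{P}_{n+1}$. The essential ingredient is then a geometric counting estimate: modulo $\pi_1(S)$, the number of linked box-pairs whose $L_{m_0}$-masses both lie in the dyadic range $[2^{-k-1}, 2^{-k}]$ should grow like $O(2^{kd})$ with $d = 3/2$. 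One factor of $2^k$ comes from the entropy-one counting of geodesic boxes of Liouville mass $\sim 2^{-k}$, while the linking constraint squeezes the endpoints of the second box into an arc of circular length comparable to $2^{-k}$ along the geodesic of the first box, yielding only $2^{k/2}$ choices rather than $2^k$. The resulting Cauchy estimate is dominated by the geometric series $\sum_k 2^{k(3/2 - 2\nu)}$, convergent precisely when $\nu > 3/4$.

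The main obstacle will be establishing this counting estimate with the sharp exponent $d = 3/2$. A naive product bound gives $d = 2$, which would force $\nu > 1$ and exclude the infinitesimal Liouville cross-ratios that motivate this paper. Extracting the improvement requires a genuine geometric analysis of the linking condition on the circle, using the fact that for a linked pair the endpoints of the second box are forced into a thin transverse strip along the first geodesic, combined with the standard arc-counting for the geodesic flow on $T^1 S$ at entropy one. Once convergence is in hand, independence of the chosen triangulation and of the particular refinement scheme follows by applying the same Cauchy estimate to a common refinement, bilinearity is immediate from the finite-sum definition, and agreement with the classical intersection pairing on $\mathcal{C}(S)$ when $\alpha$ and $\beta$ happen to be measure geodesic currents follows by monotone convergence. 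I expect Theorem~\ref{thm:Main3} to axiomatize these properties into the precise statement that Theorem~\ref{thm:Main1} refers to.
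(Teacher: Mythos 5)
Your overall architecture (dyadic refinement, telescoping differences, a counting estimate with exponent giving the threshold $\nu>\frac34$) is parallel in spirit to the paper, but the proposal fails at its first step. You want a nested sequence of $\pi_1(S)$--invariant partitions $\mathcal P_n$ of $\partial_\infty\widetilde S$ into countably many arcs, and no such partition exists: the action of $\pi_1(S)$ on $\partial_\infty\widetilde S$ is minimal, so the nonempty countable invariant set formed by the endpoints of the arcs would have dense orbit, hence would accumulate in the interior of some arc of the partition, which is impossible for a partition. Concretely, the endpoints at infinity of the lifts of any $\pi_1(S)$--invariant family of geodesics in $\widetilde S$ are dense in the circle, so they do not cut it into arcs (and a closed surface carries no ideal triangulation to begin with, so the proposed $\mathcal P_0$ is doubly unavailable). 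This obstruction is exactly why the paper does not decompose anything invariantly at infinity: it chooses a fundamental domain $\Omega\subset DG(\widetilde S)$ for the $\pi_1(S)$--action (Lemma~\ref{lem:FundDomain}), covers $\Omega$ by an adaptive subdivision scheme of non-invariant double boxes (Lemma~\ref{lem:GoodBoxes}), and must then prove that the resulting sum is independent of the subdivision scheme and of the choice of $\Omega$ (Lemmas~\ref{lem:IntIndeptBoxes} and \ref{lem:IntIndeptDomain}); that independence argument, which your proposal dismisses in one sentence, is a substantial part of the proof precisely because equivariance of the decomposition cannot be arranged.

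The counting estimate you isolate as the heart of the matter is also not the right one. The number of linked pairs of boxes with both Liouville masses $\asymp 2^{-k}$, taken modulo $\pi_1(S)$, is $\asymp 2^{2k}$, not $O(2^{3k/2})$: $DG(S)$ is $4$--dimensional with a fundamental domain that is relatively compact in $G(\widetilde S)\times G(\widetilde S)$, so covering it at side-scale $2^{-k/2}$ needs $\asymp 2^{2k}$ cells; your ``naive bound'' $d=2$ is in fact the correct global count, and the heuristic that linking confines the endpoints of the second box to an arc of length $\asymp 2^{-k}$ is false, since the geodesics crossing a fixed geodesic form a full two-dimensional family. What is true, and what actually produces the exponent $\frac34$, is that only the cells appearing in the successive differences $i_{n+1}-i_n$ matter --- those whose parent cell already fails the linking/disjointness condition, equivalently the cells accumulating on a codimension-one degeneracy locus or on the frontier of the fundamental domain --- and these number $\prec 2^{3n}$ at side-scale $2^{-n}$. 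The paper obtains this from the Minkowski-dimension-$3$ estimate for $\delta\Omega$ in Lemmas~\ref{lem:GoodBoxes} and \ref{lem:FundDomain}, giving convergence for $4\nu>d$ with $d$ slightly above $3$, and it needs the further inequality $\nu>2\left(\frac{\log r}{\log R}-1\right)+\frac d4$ to compare two schemes through a common refinement. So your numerology $4\nu>3$ is right, but the estimate you propose to prove is false as stated, the geometric justification offered for it is incorrect, and the actual mechanism (boundary counting for an adaptive, non-invariant scheme, plus the comparison arguments) is missing.
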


The construction is based on a certain bundle over $S$ with fiber the open annulus, it uses a covering of this bundle by ``double boxes'', and it heavily relies on  a relatively subtle growth estimate on the sizes of these double boxes. See \S \ref{sect:BoxCount}--\ref{sect:InterNumbHolder}. 

A much easier property is that Theorem~\ref{thm:Main1} can be applied to infinitesimal Liouville currents: 

\begin{prop}
\label{prop:InfLiouvRegular}
If $V\in T_{m_0}\T(S)$ is a vector tangent to the Teichm\"uller space at $m_0 \in \T(S)$, then the associated infinitesimal Liouville cross-ratio $L_V\in \mathcal X(S)$  is $\nu$--H\"older regular with respect to $m_0$ for every $\nu<1$. 
\end{prop}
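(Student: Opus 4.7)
My approach reduces the proposition to the classical Zygmund regularity of the infinitesimal quasiconformal boundary variation, combined with an algebraic identity expressing $L_V(I,J)$ as a divided difference of the boundary vector field.

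Realize $V \in T_{m_0}\T(S)$ by a smooth $\pi_1(S)$-equivariant Beltrami differential $\mu$ on $\widetilde S = \HH^2$, so that $t\mapsto m_t$ is tangent at $t=0$ to the family of quasiconformal deformations with Beltrami coefficient $t\mu$. These deformations extend to a family of circle homeomorphisms $f_t\col \partial_\infty\widetilde S \to \partial_\infty\widetilde S$, and the derivative $\psi := \dot f_t|_{t=0}$ is a continuous $\pi_1(S)$-invariant vector field on the circle, depending only on $V$. By classical results on the infinitesimal variation of quasiconformal maps, $\psi$ belongs to the Zygmund class on the circle: in a local affine chart,
$$|\psi(x+h) + \psi(x-h) - 2\psi(x)| \leq C|h|,$$
with $C$ uniform thanks to $\pi_1(S)$-invariance of $\psi$ and compactness of $S$. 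In particular $\psi$ is $\alpha$-H\"older for every $\alpha<1$.

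Parameterize the endpoints of $I$, $J$ in a local chart as $a<b<c<d$. Differentiating the Liouville cross-ratio formula at $t=0$ and simplifying in terms of divided differences gives the identity
$$L_V(I,J) = (a-b)(c-d) \cdot \psi[a,b,c,d] = |I|\cdot|J|\cdot \psi[a,b,c,d],$$
where $\psi[a,b,c,d]$ denotes the third divided difference of $\psi$. To bound $|\psi[a,b,c,d]|$, I would rewrite it as a normalized second difference of the three slopes $\psi[a,b]$, $\psi[b,c]$, $\psi[c,d]$, and apply the Zygmund estimate. The key observation is that this combination of slopes is controlled not by a separate logarithm for each individual slope, but by the logarithm of the ratio between the gap $c-b$ and the interval sizes $|I|,|J|$; when $L_{m_0}(I,J)\leq c_1$, that ratio is itself governed by $L_{m_0}(I,J)$. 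Combined with the asymptotic $L_{m_0}(I,J) \sim |I|\cdot|J|/\bigl((c-b)(d-a)\bigr)$ valid in this regime, one then obtains
$$|L_V(I,J)| \leq C\cdot L_{m_0}(I,J)\cdot \log\bigl(1/L_{m_0}(I,J)\bigr) \leq C_\nu \cdot L_{m_0}(I,J)^\nu$$
for every $\nu<1$, since $x\log(1/x) = O(x^\nu)$ as $x\to 0^+$.

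The main technical point is the divided-difference estimate: the naive H\"older bound on $\psi$ alone yields only $\nu \leq 1/2$, and it is essential to exploit the Zygmund cancellation in the combination of slopes — together with the fact that the effective smallness scale in that combination is determined by $L_{m_0}$ itself — to push the exponent arbitrarily close to $1$, at the cost of only a logarithmic loss. The M\"obius invariance of the Zygmund class of vector fields on $S^1$, together with the $\pi_1(S)$-invariance of $\psi$ and compactness of $S$, ensures the constants are uniform over all configurations $(I,J)$ with $L_{m_0}(I,J)\leq c_1$.
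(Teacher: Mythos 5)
Your proposal is correct in outline, but it takes a genuinely different route from the paper. The paper works in the interior of $\widetilde S$: it represents $V$ by a Beltrami coefficient $\mu$, uses Wolpert's first variation formula to write $L_V(I,J)$ as an explicit integral of $\mu$ against a rational kernel over $\HH^2$, normalizes by a M\"obius map so that $I=[-e^{\alpha},-1]$, $J=[1,e^{\alpha}]$ (where $L_{m_0}(I,J)\asymp \alpha^2$), and estimates that kernel integral directly (citing \cite{Saric1}) to obtain the sharpened bound $|L_V(I,J)|\prec \Vert V\Vert\, L_{m_0}(I,J)\,|\log L_{m_0}(I,J)|$, from which every $\nu<1$ follows. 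You instead work on the boundary circle: the Zygmund regularity of $\psi=\dot f_t|_{t=0}$, the identity $L_V(I,J)=(a-b)(c-d)\,\psi[a,b,c,d]$ (which is indeed what differentiating the logarithm of the cross-ratio gives), and a real-variable divided-difference estimate with logarithmic loss, arriving at the same $L\,|\log L|$ bound. What your route buys is a clean separation: all the analysis is packaged into the classical statement that $\dot f$ is a Zygmund vector field, and the rest is elementary; what it costs is that this classical input is itself usually proved via the same kind of integral representation the paper uses, so the overall depth is comparable, and you additionally need the M\"obius-invariant formulation of the Zygmund condition (cross-ratio distortion norm) to make the constants uniform.

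Two caveats, neither fatal. First, your intermediate claim that the relevant logarithm is that of ``the ratio between the gap $c-b$ and the interval sizes,'' and that this ratio is governed by $L_{m_0}(I,J)$, is false in an arbitrary affine chart: $L_{m_0}(I,J)$ can be small while $|I|$ is much larger than the gap (e.g.\ $a=-M$, $b=0$, $c=1$, $d=1+\epsilon$ with $M$ large). The argument must be run after a M\"obius normalization to the symmetric configuration, which is legitimate precisely because $L_{m_0}$, $L_V$ and the cross-ratio (Zygmund) norm of $\psi$ are M\"obius invariant, as you indicate only at the very end; this step should be made explicit. Second, once normalized, the Zygmund modulus of continuity $\omega(t)\prec t\log(1/t)$ already makes term-by-term slope bounds give $|\psi[a,b,c,d]|\prec|\log\alpha|$, so the advertised ``Zygmund cancellation'' is not really where the work lies, and the remark that a naive H\"older bound yields only $\nu\leq\frac12$ is inaccurate: an $h$-H\"older bound gives exponent $\frac{1+h}{2}$, and only the crude sup-norm bound gives $\frac12$. (Also, $\psi$ is $\pi_1(S)$-equivariant as a vector field rather than invariant as a function, and the uniformity of the Zygmund constant comes from $\Vert\mu\Vert_\infty$ rather than from compactness of $S$.)
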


As a consequence, given two such tangent vectors $V$, $W\in T_{m_0} \T(S)$, we can make sense of the  geometric intersection number $i(L_V, L_W)$ of their infinitesimal Liouville cross-ratios. 

\begin{thm}
\label{thm:Main2}
Let $t\mapsto m_t$ and $u \mapsto n_u$ be two differentiable curves in $\T(S)$, respectively tangent to the vectors $V$ and $W\in T_{m_o}\T(S)$ at $m_0=n_0\in \T(S)$. Then,
$$
\frac\partial{\partial t} \frac\partial{\partial u} i(L_{m_t}, L_{n_u})_{|(t,u)=(0,0)} = i(L_V, L_W),
$$
where $i(L_V, L_W)$ is the geometric intersection number provided by Theorem~{\upshape\ref{thm:Main1}} and Proposition~{\upshape\ref{prop:InfLiouvRegular}}, and where $i(L_{m_t}, L_{n_u})$ is the classical intersection number of measure geodesic currents as in \cite{Bonahon1}. 
\end{thm}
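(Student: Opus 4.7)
The strategy is to realize both sides as the same series over ``double boxes'' and then to justify the exchange of a limit with the summation.

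First, by the construction promised in Theorem~\ref{thm:Main3}, the intersection number $i(\alpha,\beta)$ of any two H\"older regular cross-ratios is a convergent sum
\[
i(\alpha,\beta)=\sum_k \alpha(I_k,J_k)\,\beta(I'_k,J'_k)
\]
over a canonical countable family of ``double boxes'' $\mathcal B_k=(I_k,J_k,I'_k,J'_k)$. When $\alpha=L_{m_t}$ and $\beta=L_{n_u}$ are honest measure geodesic currents, Fubini applied to the product measure $L_{m_t}\times L_{n_u}$ on the space of pairs of transverse geodesics shows that the same box-sum also computes the classical intersection number of \cite{Bonahon1}. Consequently the symmetric second difference
\[
\Delta(t,u)=\frac{i(L_{m_t},L_{n_u})-i(L_{m_t},L_{n_0})-i(L_{m_0},L_{n_u})+i(L_{m_0},L_{n_0})}{tu}
\]
admits the series representation $\Delta(t,u)=\sum_k \alpha_t(I_k,J_k)\,\beta_u(I'_k,J'_k)$, where the difference-quotient cross-ratios
\[
\alpha_t=\frac{L_{m_t}-L_{m_0}}{t},\qquad \beta_u=\frac{L_{n_u}-L_{n_0}}{u}
\]
converge pointwise to $L_V$ and $L_W$, respectively, as $t,u\to 0$.

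The crux is a uniform strengthening of Proposition~\ref{prop:InfLiouvRegular}: for each $\nu<1$ there exist $\delta>0$ and $c_0,c_1>0$ such that
\[
|\alpha_t(I,J)|\leq c_0\,L_{m_0}(I,J)^\nu
\]
whenever $L_{m_0}(I,J)\leq c_1$ and $|t|\leq\delta$, and an analogous bound for $\beta_u$. I would obtain this by rerunning the proof of Proposition~\ref{prop:InfLiouvRegular} on the secant quotient rather than on the tangent, exploiting the explicit formula for $\partial_s L_{m_s}(I,J)$ as an integral pairing between a Beltrami differential representing $\partial_s m_s$ and a derivative of the logarithmic cross-ratio of the four endpoints of $I$ and $J$ lifted to $\HH^2$. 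Standard uniform control on the Beltrami differentials along the curve $s\mapsto m_s$ for $s$ in a compact neighborhood of $0$ then upgrades Proposition~\ref{prop:InfLiouvRegular} to an estimate uniform in $t$.

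Fixing $\nu>\frac34$, the box-growth estimates that drive Theorem~\ref{thm:Main1} yield an absolutely summable majorant
\[
\sum_k L_{m_0}(I_k,J_k)^\nu L_{m_0}(I'_k,J'_k)^\nu <\infty
\]
for the series defining $\Delta(t,u)$, uniformly for $(t,u)$ in a neighborhood of $(0,0)$. Dominated convergence over the counting measure on boxes then gives
\[
\lim_{(t,u)\to(0,0)}\Delta(t,u)=\sum_k L_V(I_k,J_k)\,L_W(I'_k,J'_k)=i(L_V,L_W),
\]
and the existence of this limit, combined with the existence for each fixed $t$ (resp.\ $u$) of the partial derivative $\partial_u i(L_{m_t},L_{n_u})$ (resp.\ $\partial_t i(L_{m_t},L_{n_u})$) obtained by the same dominated convergence argument applied to a single difference quotient, is exactly what is needed to conclude that the mixed partial at $(0,0)$ exists and equals $i(L_V,L_W)$. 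The main obstacle is the uniform H\"older bound on the secant cross-ratios $\alpha_t,\beta_u$; once that is in hand, everything else is a dominated convergence argument sitting on top of Theorem~\ref{thm:Main1}.
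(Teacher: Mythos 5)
Your proposal is correct and takes essentially the same route as the paper: both sides are identified with the double-box series coming from the construction of Theorem~\ref{thm:Main3}, and the derivative is exchanged with the sum using uniform $\nu$--H\"older regularity of the Liouville cross-ratios and their variations near $m_0$ (via Lemma~\ref{lem:HolderStructure}, Lemma~\ref{lem:GeomEst} and Proposition~\ref{prop:InfLiouvRegular}); the paper packages the interchange as term-by-term differentiation justified by uniform convergence, while you package it as dominated convergence for second difference quotients plus an iterated-limit argument, which is the same mechanism. The ``uniform strengthening'' of Proposition~\ref{prop:InfLiouvRegular} to secant quotients that you flag as the main obstacle is precisely the uniformity in $t$ and $u$ that the paper invokes, so nothing essential is missing.
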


If we combine Theorem~\ref{thm:Main2} with earlier work of Thurston and Wolpert \cite{Wolpert2}, we automatically obtain:

\begin{thm}
\label{thm:WP}
Under the hypotheses of Theorem~{\upshape\ref{thm:Main1}}, the geometric intersection number $i(L_V, L_W)$ is equal, up to multiplication by a constant,  to the scalar product $\omega_{\mathrm{WP}}(V, W)$ of the tangent vectors $V$, $W\in T_{m_0} \T(S)$ under the Weil-Petersson metric of $\T(S)$. \qed
\end{thm}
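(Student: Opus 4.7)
The plan is to combine Theorem~\ref{thm:Main2} with the second-variation formula of Thurston and Wolpert in \cite{Wolpert2}, which expresses the Weil-Petersson form as a mixed second derivative of intersection numbers of Liouville currents. Since the excerpt already asserts that Theorem~\ref{thm:WP} follows ``automatically'' from the earlier results, the content of the proof is to identify the appropriate form of Wolpert's identity and to verify that the bilinear form it produces is $\omega_{\mathrm{WP}}$.

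First, I would recall the precise identity we need from \cite{Wolpert2}. Wolpert's computation shows that for any differentiable curve $t \mapsto m_t$ in $\T(S)$ tangent to $V$ at $t = 0$, the second derivative $\frac{\partial^2}{\partial t^2} i(L_{m_t}, L_{m_t})_{|t=0}$ equals a universal non-zero constant multiple of $\omega_{\mathrm{WP}}(V,V)$. Since the bilinear form $(V,W) \mapsto \frac{\partial^2}{\partial t\, \partial u} i(L_{m_t}, L_{n_u})_{|(t,u)=(0,0)}$ is symmetric in $V$ and $W$ (by symmetry of $i$ on measure geodesic currents) and agrees with the diagonal expression above when $V = W$, polarization yields
$$
\frac{\partial^2}{\partial t\, \partial u} i(L_{m_t}, L_{n_u})_{|(t,u)=(0,0)} = c\, \omega_{\mathrm{WP}}(V, W)
$$
for the same universal constant $c$. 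The verification of this step would rely on the classical identity $i(L_m, \delta_\gamma) = \ell_m(\gamma)$ relating Liouville currents to hyperbolic lengths of simple closed curves $\gamma$, which lets one match Wolpert's length-based formulation with the intersection-number formulation used here; density of weighted simple closed curves in the space of measure geodesic currents, together with continuity of $i$, then handles the general case.

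Second, I would invoke Theorem~\ref{thm:Main2}, which identifies the mixed partial derivative on the left-hand side above with the intersection number $i(L_V, L_W)$ of the infinitesimal Liouville cross-ratios, whose existence is guaranteed by Theorem~\ref{thm:Main1} and Proposition~\ref{prop:InfLiouvRegular}. Combining the two identities then immediately gives $i(L_V, L_W) = c\, \omega_{\mathrm{WP}}(V, W)$, which is exactly the statement of Theorem~\ref{thm:WP}.

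The only real obstacle is the bookkeeping translation between the form in which \cite{Wolpert2} states the second-variation formula, typically in terms of hyperbolic length functions or Fenchel-Nielsen coordinates, and the intersection-number formulation required here, together with confirming that the universal constant $c$ exists, is non-zero, and depends on neither $m_0$ nor the tangent vectors $V$, $W$. Non-degeneracy of the Weil-Petersson metric rules out $c = 0$, and once the translation step is done the theorem requires no further analytical input beyond the results already established in this article.
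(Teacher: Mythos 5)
Your overall plan is exactly the paper's: the paper proves Theorem~\ref{thm:WP} by nothing more than combining Theorem~\ref{thm:Main2} with Wolpert's identification of Thurston's metric with the Weil--Petersson metric in \cite{Wolpert2}, so the ``bookkeeping translation'' you describe is indeed the whole content. However, the specific form in which you state the Thurston--Wolpert input is false, and the polarization step built on it would fail. The function $m \mapsto i(L_m,L_m)$ is \emph{constant} on $\T(S)$ (it equals a constant depending only on $\chi(S)$; this is in \cite{Bonahon2}), so for any curve $t\mapsto m_t$ one has $\frac{d^2}{dt^2}\, i(L_{m_t},L_{m_t})_{|t=0}=0$, not a nonzero multiple of $\omega_{\mathrm{WP}}(V,V)$. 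Relatedly, when $V=W$ the mixed partial $\frac{\partial}{\partial t}\frac{\partial}{\partial u} i(L_{m_t},L_{m_u})_{|(t,u)=(0,0)}$ does \emph{not} agree with the diagonal second derivative $\frac{d^2}{dt^2} i(L_{m_t},L_{m_t})_{|t=0}$: the latter also contains the two pure second-derivative terms. So the identity you propose to quote from \cite{Wolpert2} cannot be the right one (it would force $\omega_{\mathrm{WP}}\equiv 0$).

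The correct translation is the following. Wolpert's theorem concerns Thurston's metric, i.e.\ the Hessian of the length of a random geodesic, which in the present language is the second derivative with the \emph{second argument frozen}: $h(V,V)=\frac{d^2}{dt^2}\, i(L_{m_t},L_{m_0})_{|t=0}$, and \cite{Wolpert2} shows $h = c\,\omega_{\mathrm{WP}}$ for a universal nonzero constant $c$ (your remark about matching via $i(L_m,\delta_\gamma)=\ell_m(\gamma)$ and density of weighted curves is the right way to see this). Now set $H(t,u)=i(L_{m_t},L_{m_u})$ for a single curve tangent to $V$; since $H(t,t)$ is constant, $H_{tt}(0,0)+2H_{tu}(0,0)+H_{uu}(0,0)=0$, and $H_{tt}(0,0)=H_{uu}(0,0)=h(V,V)$, whence $H_{tu}(0,0)=-c\,\omega_{\mathrm{WP}}(V,V)$. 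Since Theorem~\ref{thm:Main2} shows the mixed partial depends only on $(V,W)$ and is bilinear and symmetric, polarization gives $\frac{\partial}{\partial t}\frac{\partial}{\partial u} i(L_{m_t},L_{n_u})_{|(0,0)}=-c\,\omega_{\mathrm{WP}}(V,W)$ in general; combined with Theorem~\ref{thm:Main2} this yields $i(L_V,L_W)=-c\,\omega_{\mathrm{WP}}(V,W)$, which is the statement up to a constant. With this correction your argument becomes the paper's.
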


The constant depends on the topological type of the surface and on the conventions in the definition of the Weil-Petersson metric; see \cite{Wolpert2}.

To a large extent, Theorems~\ref{thm:Main1} and \ref{thm:Main2} complete the analogy between geometric intersection numbers and Weil-Petersson metric that was proposed in \cite{Bonahon2} (see also \cite{Wolpert3}). In particular, they provide a much more satisfactory framework than the clumsy construction of \cite[\S4]{Bonahon2}, which had been designed to bypass the analytic subtleties caused by the lack of regularity of infinitesimal Liouville cross-ratios.

\section{Geometric intersection numbers}
\label{sect:InterNumber}

This section is mostly heuristic, and summarizes the definition of geometric intersection numbers in the case of measure geodesic currents. 

It is conceptually convenient to endow the surface $S$ with a complex structure $m_0$. Then, the space $\partial_\infty\widetilde S \times \partial_\infty\widetilde S - \Delta$ has a natural identification with the space $G(\widetilde S)$ of oriented complete geodesics for the Poincar\'e metric of $\widetilde S$, since such a geodesic joins two distinct points of  the circle at infinity $\partial_\infty \widetilde S$. 

Let $DG(\widetilde S)$ be the \emph{double geodesic space}, consisting of all pairs $(g,h)$ of geodesics $g$, $h \in G(\widetilde S)$ which transversely meet at some point. Considering this intersection point $g\cap h$ and the tangent vectors of $g$ and $h$ at this point, we see that $DG(\widetilde S)$ can also be identified to the set of triples $(\widetilde x, v, w)$ consisting of a point  $\widetilde x\in \widetilde S$ and of two distinct unit tangent vectors $v$, $w\in T_{\widetilde x} \widetilde S$ at $\widetilde x$. This description makes it clear that the action of $\pi_1(S)$ on $DG(\widetilde S)$ is free and discontinuous, so that we can consider the quotient $DG(S) = DG(\widetilde S)/\pi_1(S)$.

Again, $DG(S)$ can be interpreted as the set of triples $(x, v, w)$ consisting of a point  $x\in S$ and of two distinct unit tangent vectors $v$, $w\in T_x S$ at $x$. In particular, it is a manifold of dimension 4. Note that it is non-compact, which is the major cause of the analytic problems that we will encounter. 

Let a \emph{box} in $G(\widetilde S) \subset \partial_\infty\widetilde S \times\partial_\infty\widetilde S  $ be a subset of the form $I\times J$, where $I$ and $J$ are intervals with disjoint closures in $\partial_\infty\widetilde S$. A \emph{double box} in $DG(\widetilde S) \subset G(\widetilde S) \times G(\widetilde S)$ is a subset of the form $B= B_1 \times B_2$ where $B_1$ and $B_2$ are two boxes of $G(\widetilde S)$ such that every geodesic $g\in B_1$ crosses every geodesic $h\in B_2$.

Finally, let a \emph{double box} in the quotient $DG(S) = DG(\widetilde S)/\pi_1(S)$ be a subset $B$ which is the image of a double box $\widetilde B$ in $DG(\widetilde S)$ small enough that $DG(\widetilde S) \to DG(S)$ is injective on the closure of $\widetilde B$, and consequently restricts to a homeomorphism $\widetilde B \to B$. 

Although the consideration of geodesics is convenient and more intuitive, the reader will notice that the spaces $G(\widetilde S)$, $DG(\widetilde S)$ and $DG(S)$, as well as the notions of boxes and double boxes, can be described without any reference to  a complex structure $m_0$ on $S$.

For future reference, we note the following immediate property.
\begin{lem}
\label{lem:SplitBox}
If $B_1$ and $B_2$ are two double boxes in $DG(S)$, their intersection $B_1 \cap B_2$ is a double box, and the complement $B_1 - B_2$ can be decomposed as the union of finitely many disjoint double boxes. \qed
\end{lem}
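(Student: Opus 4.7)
The plan is to lift everything to $DG(\widetilde S)$, where intersections and complements of boxes reduce to elementary combinatorics of intervals in the circle $\partial_\infty \widetilde S$, and then push back down to $DG(S)$ using proper discontinuity of the $\pi_1(S)$-action.

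First I would pick lifts $\widetilde B_1, \widetilde B_2 \subset DG(\widetilde S)$ of $B_1$ and $B_2$ on whose closures the projection $\pi \col DG(\widetilde S)\to DG(S)$ is injective. The closure of a double box is compact in $DG(\widetilde S)$---it is a product of four closed arcs in $\partial_\infty \widetilde S$, clipped to the open subset of transverse pairs---so proper discontinuity of $\pi_1(S)$ ensures that only finitely many translates $\gamma_1 \widetilde B_2, \dots, \gamma_n \widetilde B_2$ meet $\widetilde B_1$. Because $\pi$ is injective on $\widetilde B_1$, the images $\pi\bigl(\widetilde B_1 \cap \gamma_k\widetilde B_2\bigr)$ are pairwise disjoint and their union is exactly $B_1 \cap B_2$.

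Next I would analyze each intersection $\widetilde B_1 \cap \gamma_k\widetilde B_2$ in the cover. Writing $\widetilde B_1 = (I_1 \times J_1)\times (I_2\times J_2)$ and $\gamma_k\widetilde B_2 = (I_1' \times J_1')\times (I_2'\times J_2')$, the intersection equals $(I_1\cap I_1')\times (J_1\cap J_1')\times (I_2\cap I_2')\times (J_2\cap J_2')$. Two arcs on a circle meet in at most two components, so this product is a disjoint union of at most $2^4 = 16$ double boxes in $DG(\widetilde S)$; the transversality condition needed for each piece to really be a double box is automatic, since every geodesic in the smaller piece already belongs to both parents. Projecting down exhibits $B_1 \cap B_2$ as a finite disjoint union of double boxes of $DG(S)$, which I take to be the intended reading of the first assertion.

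For the complement, the key set-theoretic identity is
$$
(A_1\times A_2)\setminus (C_1\times C_2) = \bigl((A_1\setminus C_1)\times A_2\bigr) \sqcup \bigl((A_1\cap C_1)\times (A_2\setminus C_2)\bigr),
$$
combined with the elementary fact that the set-theoretic difference of two arcs on a circle is a finite disjoint union of arcs. Applying this identity once at the level of a box and once at the level of a double box decomposes $\widetilde B_1\setminus\bigl(\widetilde B_1\cap \gamma_k\widetilde B_2\bigr)$ into finitely many sub-double-boxes of $\widetilde B_1$; iterating over $k=1,\dots,n$ yields a finite disjoint union decomposition of $\widetilde B_1 \setminus \bigcup_k \gamma_k\widetilde B_2$, which then pushes down to the desired decomposition of $B_1 \setminus B_2$. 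The only real obstacle is bookkeeping---tracking how many sub-double-boxes arise at each stage and verifying that the transversality condition survives intersection and iterated complement---with no analytic content anywhere.
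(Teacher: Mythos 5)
Your overall route---lift to $DG(\widetilde S)$, reduce to interval combinatorics on $\partial_\infty\widetilde S$, and push back down using injectivity of the projection on the chosen lifts---is certainly the argument the paper treats as immediate, and most of it is sound: the disjointness of the images $\pi\bigl(\widetilde B_1\cap\gamma_k\widetilde B_2\bigr)$, the at-most-16-components analysis of an intersection of two double boxes upstairs, the automatic transversality of sub-products of $\widetilde B_1$, and the complement identity are all fine (and your reading of the first assertion as ``finite disjoint union'' rather than literally one double box is the right one). The genuine gap is the step ``the closure of a double box is compact in $DG(\widetilde S)$, so proper discontinuity gives finitely many translates $\gamma\widetilde B_2$ meeting $\widetilde B_1$.'' With the paper's definitions this compactness claim is false in general: a double box $(I_1\times I_2)\times(I_3\times I_4)$ only requires $\bar I_1\cap\bar I_2=\varnothing$ and $\bar I_3\cap\bar I_4=\varnothing$, while the cyclically adjacent arcs (say $\bar I_1$ and $\bar I_3$) are allowed to share an endpoint. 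When they do, there are crossing pairs in the box whose two geodesics become asymptotic to the common endpoint; their intersection points leave every compact subset of $\widetilde S$, the limiting pairs lie outside $DG(\widetilde S)$, and the closure of the box inside $DG(\widetilde S)$ is not compact. Proper discontinuity of the $\pi_1(S)$-action on $DG(\widetilde S)$ then no longer yields finiteness of $\{\gamma:\gamma\widetilde B_2\cap\widetilde B_1\neq\varnothing\}$, and finiteness can actually fail: if $\zeta^+$ is a shared endpoint of $\bar I_1,\bar I_3$ for $\widetilde B_1$, $\zeta^-$ a shared endpoint of two arcs of $\widetilde B_2$, and $\gamma$ is a hyperbolic element with attracting and repelling fixed points $\zeta^+,\zeta^-$, then $\gamma^n\widetilde B_2$ meets $\widetilde B_1$ for all large $n$, even though each box separately can be arranged (taking the remaining arcs small and generic) to project injectively on its closure; the resulting $B_1\cap B_2$ then has infinitely many components, so both your finiteness step and the unqualified statement of the lemma break down for such ``large'' double boxes.

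The fix is to make explicit the hypothesis your compactness assertion was silently using: restrict to double boxes whose four closed arcs are pairwise disjoint, or equivalently whose closure in $DG(\widetilde S)$ is compact (equivalently, whose set of crossing points is relatively compact in $\widetilde S$). For such boxes your argument is complete as written, and nothing is lost for the paper's purposes: every point of $DG(S)$ has a neighborhood basis of double boxes of this restricted kind, so Lemma~\ref{lem:BoxCover} and the later subdivision constructions only ever need the splitting statement for them. Alternatively, you would need a separate argument for the finiteness of the relevant translates, which, as the example above shows, cannot come for free from proper discontinuity alone.
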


\begin{lem}
\label{lem:BoxCover}
The space $DG(S)$ can be decomposed as the union of a locally finite family of disjoint double boxes $\{ B_i \}_{i\in I}$. 
\end{lem}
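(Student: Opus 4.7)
The plan is to first produce a locally finite open cover of $DG(S)$ by double boxes, and then disjointify it using Lemma~\ref{lem:SplitBox}. The key input from elsewhere in the paper is that the action of $\pi_1(S)$ on $DG(\widetilde S)$ is free and properly discontinuous, and that $DG(S)$ is a (non-compact) smooth $4$--manifold, hence paracompact and second countable.

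First I would verify that every point of $DG(S)$ lies in some open double box. Given $(x,v,w) \in DG(S)$, lift to $(\widetilde x,\widetilde v,\widetilde w)\in DG(\widetilde S)$ and let $g$, $h\in G(\widetilde S)$ be the oriented geodesics through $\widetilde x$ tangent to $\widetilde v$ and $\widetilde w$, with endpoints $a,b$ and $c,d$ on $\partial_\infty\widetilde S$; since $g$ and $h$ cross transversely at $\widetilde x$, the four points are distinct. Choosing sufficiently small disjoint open intervals $I\ni a$, $J\ni b$, $K\ni c$, $L\ni d$ makes $B_1=I\times J$ and $B_2=K\times L$ into boxes of $G(\widetilde S)$, and by continuity of the map sending a pair of endpoints to the resulting geodesic, every geodesic in $B_1$ still transversely meets every geodesic in $B_2$, so $B_1\times B_2$ is a double box in $DG(\widetilde S)$. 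Because $\pi_1(S)$ acts freely and properly discontinuously on $DG(\widetilde S)$, I can further shrink the four intervals so that the projection $DG(\widetilde S)\to DG(S)$ is injective on the closure of $B_1\times B_2$, yielding an open double box neighborhood of $(x,v,w)$ in $DG(S)$.

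Next, using paracompactness and second countability of $DG(S)$, I would extract from the resulting open cover a countable locally finite subcover $\{C_n\}_{n\in\N}$ by double boxes. Finally I would disjointify by the standard device: set $D_n=C_n\setminus(C_1\cup\dots\cup C_{n-1})$ and iterate Lemma~\ref{lem:SplitBox}, subtracting one $C_k$ at a time, so that each $D_n$ becomes a finite disjoint union of double boxes $B_{n,1},\dots,B_{n,m_n}$. The family $\{B_{n,j}\}$ then consists of pairwise disjoint double boxes whose union is $\bigcup_n C_n=DG(S)$, and local finiteness is inherited from $\{C_n\}$: any neighborhood meeting only finitely many $C_n$ meets only finitely many $D_n$ and therefore only finitely many of the $B_{n,j}$, because $B_{n,j}\subset D_n\subset C_n$.

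I do not expect serious obstacles. The only step requiring some care is producing the initial double-box neighborhoods in Step~1, where one must check that transversality of the two crossing geodesics survives small perturbations of their four endpoints and that the proper discontinuity of the $\pi_1(S)$--action really allows one to shrink far enough for embedded projection; but both are routine consequences of the local description of $DG(S)$ as the space of triples $(x,v,w)$ with $v\neq w$ in $T_x S$.
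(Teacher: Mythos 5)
Your argument is correct and follows essentially the same route as the paper's (terser) proof: every point of $DG(S)$ has a double-box neighborhood, one assembles these into a locally finite cover, and one disjointifies by repeated use of Lemma~\ref{lem:SplitBox}. The only minor imprecision is the phrase ``locally finite subcover'': paracompactness gives a locally finite \emph{refinement}, so you should instead invoke the standard compact-exhaustion argument on the second countable, locally compact manifold $DG(S)$ to get a countable locally finite cover by double boxes, which is exactly the routine step the paper also leaves implicit.
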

\begin{proof}
It should be clear from definitions that every element of $DG(S)$ is contained in the interior of some double box. We can therefore write $DG(S)$ as the union of a locally finite family of double boxes. We can then arrange that these double boxes are disjoint by successive applications of Lemma~\ref{lem:SplitBox}. 
\end{proof}

A cross-ratio function $\alpha \in \mathcal X(S)$ associates a number $\alpha(B) = \alpha(I_1, I_2)$  to each box $B = I_1 \times I_2$ in $G(\widetilde S)$. 

Two cross-ratio functions $\alpha$, $\beta \in \mathcal X(S)$ associate a number $\alpha\times\beta(B) = \alpha(B_1) \beta(B_2)$ to each double box $B = B_1 \times B_2$ in $DG(\widetilde S)$. Finally, if $B$ is a double box in $DG(S)$ image of a double box $\widetilde B \subset DG(\widetilde S)$, define $\alpha \times \beta(B) = \alpha \times \beta(\widetilde B)$. The invariance of $\alpha$ and $\beta$ under the action of $\pi_1(S)$ guarantees that $\alpha \times \beta(B)$ depends only on $\alpha$, $\beta$ and $B$, and not on the double box $\widetilde B$ lifting $B$ to $DG(\widetilde S)$. 

We would like to define the \emph{geometric intersection number} $\I(\alpha, \beta)$ of the cross-ratios $\alpha$, $\beta \in \mathcal X(S)$  as the infinite sum
$$
\I(\alpha, \beta) = \sum_{i\in I} \alpha\times \beta(B_i)
$$
for some decomposition $DG(S) = \bigcup_{i\in I} B_i$ as in Lemma~\ref{lem:BoxCover}. 

When $\alpha$ and $\beta$ are measure geodesic currents, this sum is proved to be (absolutely) convergent in \cite[\S 4.2]{Bonahon1}, and this for any decomposition of $DG(S)$ into disjoint double boxes. 

However, for cross-ratio functions $\alpha$, $\beta \in \mathcal X(S)$, we need to find a scheme that provides a decomposition $DG(S) = \bigcup_{i\in I} B_i$ for which the above sum converges, and is independent of the decomposition of $DG(S)$ into double boxes provided by that scheme.

We actually will not quite carry out this program, and our construction will only  use a locally finite decomposition into double boxes of a suitable \emph{open dense subset} of  $DG(S)$.

\section{Good coverings by double boxes}
\label{sect:BoxCount}

In many of the estimates of the article, we say that the quantity $X$ is \emph{of order at most} $Y$, and we write $X\prec Y$, if there exists a constant $c>0$ such that $X\leq cY$. We say that $X$ is \emph{of the same order as} $Y$, and we write $X\asymp Y$, if $X\prec Y$ and $Y\prec X$. 

Choose a complex structure $m_0\in \T(S)$ and a base point $\widetilde x_0\in \widetilde S$. This defines a riemannian metric on the circle at infinity $\partial_\infty \widetilde S$, where the distance between $\eta$, $\xi \in \partial_\infty\widetilde S$ is defined as the angle between the Poincar\'e geodesics joining $\widetilde x_0$ to $\eta$ and $\xi$, respectively. Taking a different base point $\widetilde x_0\in \widetilde S$ modifies this metric only up to bi-Lipschitz equivalence. However, if we change the complex structure  $m_0\in \T(S)$, the new metric is usually only bi-H\"older equivalent to the original one. We  state this property for future reference.

\begin{lem}
\label{lem:HolderStructure}
On the circle at infinity $\partial_\infty \widetilde S$, let $d_0$ be the metric induced as above by the choice of a complex structure $m_0 \in \T(S)$ and of a base point $\widetilde x_0 \in \widetilde S$, and let $d_1$ be similarly associated to $m_1 \in \T(S)$ and $\widetilde x_1 \in \widetilde S$. Then there exists $\nu\leq 1$ such that 
$$
d_0(\xi, \xi')^{\frac1\nu} \prec d_1(\xi, \xi') \prec d_0(\xi, \xi')^\nu  
$$
for all $\xi$, $\xi'\in \partial_\infty \widetilde S$.

In addition, $\nu$ tends to $1$ as $m_1$ tends to $m_0$ in $\T(S)$.   \qed
\end{lem}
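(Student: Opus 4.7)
The plan is to reduce both assertions of the lemma to the classical fact that any $(K,C)$-quasi-isometry of $\HH^2$ extends to the circle at infinity as a bi-H\"older homeomorphism, with H\"older exponent depending only on $K$ and $C$ and tending to $1$ as $(K,C)\to(1,0)$.

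First I would dispose of the base-point dependence. For a fixed complex structure $m$ on $S$, moving the base point from $\widetilde x_0$ to $\widetilde x_0'$ alters the visual metric $d_m$ only by bi-Lipschitz equivalence: an elementary computation in $\HH^2$ shows that the angle subtended at $\widetilde x_0'$ by two points $\eta,\xi\in\partial_\infty\widetilde S$ differs from the angle at $\widetilde x_0$ by a multiplicative factor bounded in terms of $d_{\HH^2}(\widetilde x_0,\widetilde x_0')$ alone. Since bi-Lipschitz equivalence is bi-H\"older with exponent $1$, this contributes a harmless factor, and the real content lies in comparing the two complex structures.

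Next I would model the change of complex structure by a quasi-isometry. Lift the identity map $(S,m_0)\to(S,m_1)$ to a $\pi_1(S)$-equivariant diffeomorphism $\Phi\col\widetilde S\to\widetilde S$. Since the Poincar\'e metrics $g_{m_0}$ and $\Phi^*g_{m_1}$ both descend to smooth Riemannian metrics on the compact surface $S$, their ratio is bounded above and below by constants $K^{\pm1}$ with $K\geq 1$. Hence $\Phi$ is $K$-bi-Lipschitz from $(\widetilde S,g_{m_0})$ to $(\widetilde S,g_{m_1})$, in particular a $(K,0)$-quasi-isometry between two copies of $\HH^2$, and $K=K(m_0,m_1)\to 1$ as $m_1\to m_0$ in $\T(S)$. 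Under the canonical $\pi_1(S)$-equivariant identification of the Gromov boundary $\partial_\infty\widetilde S$ with the boundary circles of the two uniformizations, the boundary extension of $\Phi$ is just the identity of $\partial_\infty\widetilde S$, so the metric $d_0$ is (up to the base-point factor already handled) the pull-back of $d_1$ by $\Phi$.

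Finally I would invoke two standard results from hyperbolic geometry: (i) a $K$-quasi-isometry of $\HH^2$ extends to an $\eta$-quasi-symmetric self-homeomorphism of $\SSS^1$, with $\eta$ controlled only in terms of $K$ and approaching the identity as $K\to 1$; and (ii) any quasi-symmetric self-homeomorphism of $\SSS^1$ is bi-H\"older with exponent $\nu\leq 1$ determined by $\eta$, and $\nu\to 1$ as $\eta$ approaches the identity. Combining these with the previous paragraph yields the bi-H\"older comparison between $d_0$ and $d_1$ with exponent $\nu=\nu(m_0,m_1)$, and the continuity $\nu(m_0,m_1)\to 1$ as $m_1\to m_0$. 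The main obstacle is to cite (or sketch) the quantitative dependence $\nu=\nu(K)$ accurately; this is the familiar boundary-regularity story for quasi-isometries of $\HH^2$, and the quantitative version is exactly what makes the last sentence of the lemma work.
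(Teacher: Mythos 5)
The paper gives no argument for this lemma at all---it is stated with a \qed{} as a standard fact---and your proposal is precisely the standard proof the authors are implicitly invoking, so it is correct and there is no divergence of method to report. Two quantitative points deserve to be made explicit if you write it out: first, the bi-H\"older comparison is cleanest either through the distortion of Gromov products under a $(K,C)$--quasi-isometry (the visual metric at $\widetilde x_0$ is comparable to $\E^{-(\xi\mid\xi')_{\widetilde x_0}}$, giving exponents $K$ and $1/K$ directly), or through Mori's theorem for the $K$--quasiconformal equivariant map between the two uniformizations, rather than the two-step detour through quasi-symmetry, where the dependence of the H\"older exponent on $\eta$ is less transparent; second, the assertion that $K\to 1$ as $m_1\to m_0$ in $\T(S)$ tacitly uses that representative Poincar\'e metrics can be chosen to vary continuously (indeed smoothly) over Teichm\"uller space, so that the identity map between nearby representatives is $(1+\epsilon)$--bi-Lipschitz; this is standard but is exactly where the final sentence of the lemma is decided.
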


This riemannian metric on the circle at infinity $\partial_\infty \widetilde S$ gives a riemannian metric on the geodesic space $G(\widetilde S)\subset \partial_\infty \widetilde S \times \partial_\infty \widetilde S$, and therefore on the product $ G(\widetilde S) \times G(\widetilde S)$. 

With this data, a relatively compact subset $X\subset G(\widetilde S) \times G(\widetilde S)$ has \emph{Minkowski $m_0$--dimension} $\leq d$ if  the volume of the $\varepsilon$--neighborhood $U_\varepsilon$ of $X$ in $G(\widetilde S) \times G(\widetilde S)$ is such that 
$$
\mathrm{vol} (U_\varepsilon) \prec \varepsilon^{4-d}
$$
as $\varepsilon>0$ is bounded above.  This definition is clearly independent of the choice of base point $\widetilde x_0\in \widetilde S$, but does depend on the complex structure $m_0\in \T(S)$. The Minkowski dimension is also often called the \emph{box counting dimension}, but this terminology would be here clumsy since we are already dealing with many types of boxes. Note that $d\leq 4$, and that a separating subset necessarily has Minkowski $m_0$--dimension $\geq 3$.

Recall that the double geodesic space $DG(\widetilde S)$ consists of all pairs of geodesics $(g,h) \in G(\widetilde S) \times G(\widetilde S)$ such that $g$ and $h$ transversely meet in one point.

A \emph{subdivision scheme} for a subset $\Omega\subset DG(\widetilde S)  $ consists of two families $\{ \mathcal I_n \}_{n \in \N}$ and  $\{ \mathcal B_n \}_{n \in \N}$ such that:
\begin{enumerate}
\item each $I\in \mathcal I_n$ is an interval in the circle at infinity $\partial_\infty \widetilde S$;
\item the family $\mathcal I_{n+1}$ is obtained from $\mathcal I_n$ by subdividing each interval $I\in \mathcal I_n$ into two intervals;
\item each $B\in \mathcal B_n$ is a double  box $I_1 \times I_2 \times I_3 \times I_4$ in $DG(\widetilde S) \subset (\partial_\infty \widetilde S)^4$ where $I_1$, $I_2$, $I_3$, $I_4$ are intervals of $\mathcal I_n$;
\item $\Omega = \bigcup_{n=1}^\infty \bigcup_{B \in \mathcal B_n} B$;
\item any two double boxes $B\in \mathcal B_n$ and $B'\in \mathcal B_{n'}$ are disjoint.
\end{enumerate}

The intervals $I\in \mathcal I_n$ can be closed, open or semi-open. In particular, in Condition (5), the {closures} of two boxes are allowed to have a non-trivial intersection. We actually will not worry much about the box boundaries, as they are irrelevant for the type of cross-ratio functions considered in the rest of the article.

\begin{lem}
\label{lem:GoodBoxes}
Let $\Omega$ be an open  subset of the double geodesic space $DG(\widetilde S) \subset  G(\widetilde S) \times G(\widetilde S)$ that is relatively compact  in $G(\widetilde S) \times G(\widetilde S)$ and   whose topological frontier $\delta\Omega $  in $G(\widetilde S) \times G(\widetilde S)$ has Minkowski $m_0$--dimension $\leq d$. Pick two numbers $0 < r\leq \frac12 \leq R<1$. Then  there exists a subdivision scheme $\{ \mathcal I_n \}_{n \in \N}$,  $\{ \mathcal B_n \}_{n \in \N}$, such that:

\begin{enumerate}
\item the length of each interval  $I \in \mathcal B_n$ is of order between $r^n$ and $R^n$, for the metric on $\partial_\infty \widetilde S$ defined by the complex structure $m_0$ and by a choice of base point in $\widetilde S_0$;
\item each double box $B \in \mathcal B_n$ is at distance $\prec R^n$ from the complement of $\Omega$ in $G(\widetilde S) \times G(\widetilde S)$;
\item the number of boxes in $\mathcal B_n$ is $\prec r^{-dn}$. 
\end{enumerate}
\end{lem}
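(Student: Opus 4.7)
The plan is a Whitney-type selection on a dyadic hierarchy of intervals of $\partial_\infty \widetilde S$. First I would take $\mathcal I_1$ to be a finite partition of $\partial_\infty \widetilde S$ into half-open intervals of length $\asymp \frac12$ in the metric induced by $m_0$ and $\widetilde x_0$, and obtain $\mathcal I_{n+1}$ from $\mathcal I_n$ by bisecting each interval into two half-open intervals of equal length. Each $I \in \mathcal I_n$ then has length $\asymp 2^{-n}$, which lies between $r^n$ and $R^n$ up to multiplicative constants because $r \leq \frac12 \leq R$, so property (1) is immediate.

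At each level $n$, let $\widehat{\mathcal B}_n$ consist of the products $B = I_1 \times I_2 \times I_3 \times I_4$ with $I_j \in \mathcal I_n$ whose closures are pairwise disjoint and arranged in cyclic order $I_1, I_3, I_2, I_4$ around $\partial_\infty \widetilde S$; this is exactly the condition making $B$ a valid double box in $DG(\widetilde S)$. Each $B \in \widehat{\mathcal B}_n$ has a unique \emph{parent} $\widehat B$ at level $n-1$, obtained by replacing each $I_j$ by the interval of $\mathcal I_{n-1}$ that contains it. Let $\mathcal B_n$ be the set of $B \in \widehat{\mathcal B}_n$ with $B \subset \Omega$ whose parent $\widehat B$ is either not a valid double box at level $n-1$, or is a valid double box that fails to be contained in $\Omega$ (for $n=1$, simply keep every $B \in \widehat{\mathcal B}_1$ contained in $\Omega$). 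Since $\Omega$ is open and each of its points corresponds to four distinct, pairwise separated endpoints on $\partial_\infty \widetilde S$, every $p \in \Omega$ lies in a level-$n$ valid double box inside $\Omega$ for all sufficiently large $n$, and the minimal such $n$ places $p$ in a member of $\mathcal B_n$: this gives condition (4). Dyadic nesting forces the pairwise disjointness in (5): if $B \in \mathcal B_n$ and $B' \in \mathcal B_{n'}$ with $n < n'$ intersect, then $B' \subset B$, placing the parent of $B'$ inside $B \subset \Omega$ and contradicting the selection rule.

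For property (2), the parent $\widehat B$ of a selected $B \in \mathcal B_n$ has diameter $\asymp 2^{-n}$ and is not contained in $\Omega$, so $B$ is within distance $\prec 2^{-n} \leq R^n$ of the complement of $\Omega$ in $G(\widetilde S) \times G(\widetilde S)$. Property (3) is the delicate step: the connectedness of $\widehat B$, together with $\widehat B$ meeting both $\Omega$ (through $B$) and its complement, forces $\widehat B$ to meet the frontier $\delta\Omega$. Consequently every selected $B$ lies in the $c \cdot 2^{-n}$-neighborhood of $\delta\Omega$ for some fixed $c > 0$. Since the members of $\mathcal B_n$ are pairwise disjoint with volume $\asymp 2^{-4n}$, the Minkowski $m_0$-dimension hypothesis gives
$$
|\mathcal B_n| \cdot 2^{-4n} \prec \mathrm{vol}\bigl( c \cdot 2^{-n}\text{-neighborhood of }\delta\Omega \bigr) \prec 2^{-n(4-d)},
$$
whence $|\mathcal B_n| \prec 2^{dn} \leq r^{-dn}$ because $r \leq \frac12$.

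The main obstacle is this last count, which relies on the geometric fact that the Whitney rule pins every selected box to a neighborhood of $\delta\Omega$, combined with the Minkowski dimension input; the remaining steps are essentially bookkeeping on a dyadic tree.
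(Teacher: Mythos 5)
Your argument is correct and is essentially the proof in the paper: dyadic bisection of a finite level-one family of intervals, the Whitney-type stopping rule (box inside $\Omega$, parent not), the observation that the parent forces each selected box into a $\prec 2^{-n}$-neighborhood of $\delta\Omega$, and the Minkowski-dimension volume count giving $\prec 2^{dn}\leq r^{-dn}$ boxes at level $n$. The only differences are cosmetic: you partition the whole circle at level one instead of starting from a finite family of boxes covering the relatively compact $\overline\Omega$, and you absorb general $r\leq\frac12\leq R$ directly into the constants rather than first reducing to the case $r=R=\frac12$ as the paper does.
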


\begin{proof} We will prove the result in the case where $r=R=\frac12$, which of course implies the general case. (The result is stated in the above form for future reference).

Since $\Omega$ is relatively compact  in $G(\widetilde S) \times G(\widetilde S)$, there exists a finite family of boxes $A_1$, $A_2$, \dots, $A_s$ in $G(\widetilde S)$ such that $\Omega$ is contained in the union of the products $A_i \times A_j$. 

Each box $A_i$ is of the form $A_i = I_i \times J_i$, where $I_i$ and $J_i$ are two disjoint intervals in the circle at infinity $\partial_\infty \widetilde S$. Let $\mathcal I_1$ be the family of these finitely many intervals. By subdivision of the boxes $A_i$, we can arrange that these intervals are disjoint. 

We now define the sequence $\mathcal I_n$ by induction, where $\mathcal I_1$ is the family of the above intervals $I_i$, $J_i$,  and where $\mathcal I_{n+1}$ is obtained from $\mathcal I_n$ by subdividing each interval into two intervals of equal lengths. In particular, the length of each $\I\in \mathcal I_n$ is of order $2^{-n}$.

Define the family $\mathcal B_n$ to consist of all double boxes $B =I_1 \times I_2 \times I_3 \times I_4$ with all four $I_i$ in $\mathcal I_n$, such that:
\begin{enumerate}
\item the double box $B$ is contained in $\Omega$;
\item if $I_i' \in \mathcal I_{n-1}$ is the level $n-1$ interval that contains $I_i$, the double box $B' =I_1' \times I_2' \times I_3' \times I_4'$ is not  contained in $\Omega$ (so that $B' \not\in \mathcal B_{n-1}$). 
\end{enumerate}

It is immediate from the construction that any two double boxes $B\in \mathcal B_n$ and $B'\in \mathcal B_{n'}$ are disjoint.

Also, the union $\bigcup_{n=1}^\infty \bigcup_{B \in \mathcal B_n} B$  is equal to $\Omega$. Indeed, for every $n$, each pair of geodesics $(g,f) \in \Omega$ is contained in some double box $B =I_1 \times I_2 \times I_3 \times I_4$ with all four $I_i$ in $\mathcal I_n$. This double box $B$ will  be contained in $\Omega$ for $n$ large enough since $\Omega$ is open;  this box will belong to $\mathcal B_n$ for the first such $n$. 

Therefore,  $\{ \mathcal I_n \}_{n \in \N}$ and  $\{ \mathcal B_n \}_{n \in \N}$  provide a subdivision scheme for $\Omega$, and the lengths of the intervals of $\mathcal I_n$ are of order $2^{-n}$. 

The construction also makes it clear that every $\widetilde B \in \mathcal B_n$ is at distance $\prec 2^{-n}$ from the complement of $\Omega$ in $G(\widetilde S) \times G(\widetilde S)$, and therefore at distance $\prec 2^{-n}$ from the frontier $\delta\Omega$ of $\Omega$. Indeed, the level $n-1$ double box that contains $\widetilde B$ is not contained in $\Omega$. 

In particular, each $B \in \mathcal B_n$ is contained in the $\epsilon_n$--neighborhood of the frontier $\delta\Omega$, where $\epsilon_n \asymp2^{-n}$. By definition of Minkowski $m_0$--dimension, the volume of this neighborhood is $\prec 2^{-(4-d)n}$. On the other hand, the volume of each $B \in \mathcal B_n$ is of order $2^{-4n}$. Since the double boxes have disjoint interior, we conclude that the number of $B \in \mathcal B_n$ is of order at most~$2^{dn}$.

This concludes the proof of Lemma~\ref{lem:GoodBoxes} when $r=R=\frac12$, and therefore in the general case. 
\end{proof}

\section{Intersection number of H\"older regular cross-ratios}
\label{sect:InterNumbHolder}

Now, consider two  cross-ratio functions  $\alpha$, $\beta \in \mathcal X(S)$  that are $\nu$--H\"older regular with respect to $m_0$, as defined in the introduction.

For an open subset  $\Omega \subset DG(\widetilde S)$ satisfying the hypotheses of Lemma~\ref{lem:GoodBoxes}, let  $\mathcal B = \bigcup_{n\in \N} \mathcal B_n$ be the family of double boxes covering $\Omega$ provided by that statement. 
As in \S \ref{sect:InterNumber}, each double box $ B\in {\mathcal B}$ is the product $ B =  B_1 \times B_2$ of two boxes in the geodesic space $G(\widetilde S)$, and we can define $\alpha\times \beta (B) = \alpha(B_1) \beta(B_2)$ and
$$
\mathrm i_{\Omega} (\alpha, \beta) = \sum_{B \in \mathcal B} \alpha \times \beta(B).
$$

\begin{lem}
\label{lem:IntersectionConverges}
Under the hypotheses of Lemma~{\upshape\ref{lem:GoodBoxes}}, let $\alpha $, $\beta \in \mathcal X(S)$  be $\nu$--H\"older regular with respect to $m_0$. If   $\nu > \frac d4  \frac{\log r}{\log R}$ and  if $\mathcal B = \bigcup_{n\in \N} \mathcal B_n$ is the family of double boxes provided by Lemma~{\upshape\ref{lem:GoodBoxes}}, the sum
$$
\mathrm i_{\Omega} (\alpha, \beta) = \sum_{B \in \mathcal B} \alpha \times \beta(B).
$$
is (absolutely) convergent.
\end{lem}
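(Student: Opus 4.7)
The plan is to bound each summand using the $\nu$-Hölder regularity of $\alpha$ and $\beta$, and then to sum the resulting geometric series level by level via the box-count estimate of Lemma~\ref{lem:GoodBoxes}(3). Concretely I would rewrite
\[
\sum_{B \in \mathcal B} |\alpha \times \beta(B)| = \sum_{n=1}^\infty \sum_{B \in \mathcal B_n} |\alpha(B_1)|\,|\beta(B_2)|
\]
and show that the inner sum at level $n$ is $\prec (r^{-d} R^{4\nu})^n$.

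The essential step is to estimate the Liouville cross-ratio $L_{m_0}(B_i)$ of each component box of a double box $B = B_1 \times B_2 \in \mathcal B_n$. Write $B_1 = I_1 \times I_2$, where $I_1, I_2 \in \mathcal I_n$ have $d_0$-length $\prec R^n$ by Lemma~\ref{lem:GoodBoxes}(1). Because $B \subset \Omega$ and $\overline\Omega$ is compact in $G(\widetilde S) \times G(\widetilde S)$, the image of $\overline\Omega$ under projection to either factor is a compact subset of $G(\widetilde S)$; since the two endpoints of any geodesic in this compact set are distinct and vary continuously, the distance between them is bounded below by a universal $\delta > 0$. In particular $d_0(I_1, I_2) \geq \delta$. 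Expanding the explicit cross-ratio formula defining $L_{m_0}$ at first order in the (small) lengths of the intervals then gives $L_{m_0}(B_1) \asymp |I_1|\,|I_2|/d_0(I_1,I_2)^2 \prec R^{2n}$, and similarly for $B_2$.

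With this estimate, I would apply $\nu$-Hölder regularity: once $n$ is large enough that $L_{m_0}(B_i) \leq c_1$, one has $|\alpha(B_1)|, |\beta(B_2)| \prec R^{2n\nu}$ and so $|\alpha \times \beta(B)| \prec R^{4n\nu}$. Multiplying by the cardinality bound $\#\mathcal B_n \prec r^{-dn}$ yields $\sum_{B \in \mathcal B_n} |\alpha \times \beta(B)| \prec (r^{-d} R^{4\nu})^n$. The hypothesis $\nu > \frac d4 \frac{\log r}{\log R}$ is precisely equivalent to $r^{-d} R^{4\nu} < 1$ (take logarithms and divide by $\log R < 0$, which reverses the inequality), so summing over $n$ produces a convergent geometric series. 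The finitely many initial levels where $L_{m_0}(B_i)$ may exceed $c_1$ contribute only a finite sum, since each $\mathcal B_n$ is finite and each $\alpha(B_1)$ is a finite real number. The main difficulty in this argument is the Liouville estimate $L_{m_0}(B_i) \prec R^{2n}$: a naive bound using only the smallness of the intervals fails unless one knows that the two subintervals forming a component box do not collapse onto a common point, which is precisely where the relative compactness of $\Omega$ in the full ambient space $G(\widetilde S) \times G(\widetilde S)$, rather than merely in $DG(\widetilde S)$, enters essentially.
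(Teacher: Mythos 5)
Your proposal is correct and follows essentially the same argument as the paper: bound $L_{m_0}(B_i)\prec R^{2n}$ for component boxes meeting a fixed compact part of $G(\widetilde S)$ (so the defining intervals stay uniformly separated), apply $\nu$--H\"older regularity to get $|\alpha\times\beta(B)|\prec R^{4\nu n}$, and multiply by the count $\prec r^{-dn}$ to obtain a geometric series convergent exactly when $\nu > \frac d4\frac{\log r}{\log R}$. Your explicit treatment of the uniform endpoint separation and of the finitely many initial levels where $L_{m_0}(B_i)$ may exceed $c_1$ merely spells out details the paper leaves implicit.
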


\begin{proof} For a box $B  \subset G(\widetilde S)$ containing at least one geodesic of $\Omega$ (so that it is at bounded distance from the base point of $\widetilde S$), the Liouville mass $L_{m_0}(B)$ is of the same order as the product of its side lengths. Therefore, if the double box  $B = B_1 \times B_2$ is in $\mathcal B_n$, 
$$
\alpha\times \beta(B) = \alpha(B_1) \beta(B_2)
\prec L_{m_0}(B_1)^\nu L_{m_0}(B_2)^\nu 
\prec R^{4\nu n}
$$
because $B_1$ and $B_2$ have side lengths of order $\prec R^n$ and meet a fixed compact subset of $G(\widetilde S)$, namely the union of the images of $\Omega$ under the two projections $G(\widetilde S) \times G(\widetilde S) \to G(\widetilde S)$. 

As a consequence, since the number of elements of $\mathcal B_n$ is of order at most $r^{-dn}$, 
$$
 \sum_{B \in \mathcal B} \alpha \times \beta(B)
 =  \sum_{n=1}^\infty \sum_{B \in \mathcal B_n} \alpha \times \beta(B)
 \prec \sum_{n=1}^\infty r^{-dn} R^{4\nu n}
 <\infty
$$
if $\nu > \frac d4  \frac{\log r}{\log R}$.
\end{proof}

\begin{lem}
\label{lem:IntIndeptBoxes}
Under the hypotheses of Lemma~{\upshape\ref{lem:IntersectionConverges}}, suppose in addition that  $\nu > 2\left( \frac{\log r}{\log R} -1  \right) + \frac d4$. Then the sum
$$
\mathrm i_{\Omega} (\alpha, \beta) = \sum_{B \in \mathcal B} \ \alpha \times \beta(B).
$$
is independent of the subdivision scheme providing the family of double boxes~$\mathcal B$.
\end{lem}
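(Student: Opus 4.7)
The plan is to compare two subdivision schemes of $\Omega$ via a truncation argument combined with a boundary error estimate: both truncated sums split into a common core plus boundary contributions, and the strengthened hypothesis on $\nu$ forces the boundary contributions to vanish in the limit.

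Let $\{(\mathcal I_n, \mathcal B_n)\}_{n\in\N}$ and $\{(\mathcal I'_n, \mathcal B'_n)\}_{n\in\N}$ be two subdivision schemes for $\Omega$, and set $\Omega_N = \bigcup_{n \leq N}\bigcup_{B \in \mathcal B_n} B$ and $\Omega'_{N'} = \bigcup_{n \leq N'}\bigcup_{B' \in \mathcal B'_n} B'$. The first step is to decompose each $B \in \mathcal B_{\leq N}$ as a finite disjoint union of double boxes: the intersections $B \cap B'$ with $B' \in \mathcal B'_{\leq N'}$ (which are double boxes by Lemma~\ref{lem:SplitBox}), together with finitely many double boxes tiling $B \setminus \Omega'_{N'}$, obtained by iterated application of Lemma~\ref{lem:SplitBox}. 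Finite additivity of $\alpha \times \beta$ then expresses $\sum_{B \in \mathcal B_{\leq N}} \alpha \times \beta(B)$ as a symmetric ``core'' term $\sum_{B,B'} \alpha \times \beta(B \cap B')$ plus a boundary contribution supported in $\Omega_N \setminus \Omega'_{N'}$. Carrying out the dual decomposition on $\mathcal B'_{\leq N'}$, the difference of the two truncated sums reduces to the difference of two such boundary contributions, supported in the symmetric difference $\Omega_N \triangle \Omega'_{N'}$, a region concentrated within a $\prec R^{\min(N,N')}$--neighborhood of the frontier $\delta\Omega$.

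The main step is to show these boundary contributions tend to $0$ as $N, N' \to \infty$. Each decomposition cell $D$ arising from $B \setminus \Omega'_{N'}$, with $B \in \mathcal B_n$ and $n \leq N$, has side lengths of order at most $R^n$; by $\nu$--H\"older regularity, $|\alpha \times \beta(D)|$ is then controlled by the appropriate power of its side length. The number of such cells per $B$ is bounded by the number of $B' \in \mathcal B'_m$ meeting $B$, which for $m \geq n$ admits two complementary bounds: a volume estimate of order $(R^n/r^m)^4$ and the Minkowski count $\prec r^{-dm}$ inherited from Lemma~\ref{lem:GoodBoxes}. Assembling these bounds into a double sum over the levels $n$ and $m$, and tracking the worst-case level mismatch $s = \log r/\log R \geq 1$ between intervals of the two schemes, one obtains a geometric series whose convergence to $0$ requires precisely the strengthened hypothesis $\nu > 2(s-1) + d/4$; the extra term $2(s-1)$ compensates exactly for the interval-size mismatch between the schemes.

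The main obstacle is this last estimate: making the bookkeeping on decomposition cells sharp enough that the resulting geometric series converges to $0$ under the stated hypothesis, not merely to some constant. Once this boundary estimate is established, passing to the limit $N, N' \to \infty$, combined with the absolute convergence of both schemes' sums via Lemma~\ref{lem:IntersectionConverges}, yields equality of the values $\mathrm i_\Omega(\alpha, \beta)$ computed from the two schemes.
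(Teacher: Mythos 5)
Your overall plan is the same as the paper's: compare the two schemes through a common refinement, observe that the discrepancy is carried by cells lying within $\prec R^{\min(N,N')}$ of the frontier $\delta\Omega$, count those cells using the Minkowski--dimension hypothesis, and kill the resulting geometric series with the strengthened bound on $\nu$. The gap is in the one step you yourself flag as the main obstacle: the bookkeeping you describe does not close. First, a small point: the number of cells tiling $B\setminus\Omega'_{N'}$ produced by iterated applications of Lemma~\ref{lem:SplitBox} is \emph{not} bounded by the number of $B'\in\mathcal B'_m$ meeting $B$ (each removal multiplies pieces); to get a controlled decomposition you must pass to the grid generated by the two interval families. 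More seriously, your two ingredients are incompatible: you bound each cell $D\subset B\setminus\Omega'_{N'}$ by the \emph{coarse} scale of $B$, giving $|\alpha\times\beta(D)|\prec R^{4\nu n}$, a quantity independent of $m$ and of $N'$, while your cell count per $B$, namely $\min\bigl((R^n/r^m)^4,\,r^{-dm}\bigr)$, grows without bound in $m$. The resulting double sum over $n$ and $m$ has a divergent inner sum, so it does not tend to $0$ as $N,N'\to\infty$, and the threshold $\nu>2\bigl(\frac{\log r}{\log R}-1\bigr)+\frac d4$ does not emerge from the bounds as assembled. Nor can you count the surviving cells by a direct volume argument at any scale: intersections of intervals from the two schemes have no lower bound on their size, so the cells can be arbitrarily thin slivers.

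The repair is exactly the device used in the paper. At each stage $n$ refine \emph{both} families down to level $n$: subdivide every $B\in\mathcal B_k$, $k\leq n$, into double boxes with sides in $\mathcal I_n$ (family $\mathcal C_n$), do the same for $\mathcal B'$, and take a common refinement $\mathcal D_n$ in which each double box of $\mathcal C_n$ or $\mathcal C_n'$ splits into $\prec (R^n/r^n)^4$ cells, because an interval of $\mathcal I_n$ meets $\prec R^n/r^n$ intervals of $\mathcal I_n'$. Then every discrepancy cell has all four side lengths $\prec R^n$ at the \emph{current} fine scale, hence contributes $\prec R^{4\nu n}$, and the counting is done not on the slivers but on their parents $C\in\mathcal C_n$: a parent containing a discrepancy cell meets some $B'\in\mathcal B'_m$ with $m>n$, hence lies within $\asymp R^n$ of $\delta\Omega$ by the distance property of Lemma~\ref{lem:GoodBoxes}, and since parents have volume $\geq r^{4n}$ the Minkowski hypothesis bounds their number by $\prec R^{(4-d)n}/r^{4n}$. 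Multiplying the three factors gives a discrepancy $\prec R^{(8-d+4\nu)n}r^{-8n}$ between the level-$n$ partial sums, and this tends to $0$ precisely when $\nu>2\bigl(\frac{\log r}{\log R}-1\bigr)+\frac d4$; it is the product of the boundary count $R^{(4-d)n}r^{-4n}$ with the refinement factor $R^{4n}r^{-4n}$ that produces the $2\bigl(\frac{\log r}{\log R}-1\bigr)$ term you were aiming for. With that replacement your truncation-and-limit conclusion goes through.
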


\begin{proof}
Let $\mathcal B = \bigcup_{n=1}^\infty \mathcal B_n $ and $\mathcal B' = \bigcup_{n=1}^\infty \mathcal B_n' $ be two families of double boxes as in Lemma~\ref{lem:GoodBoxes}, respectively associated to families $\{\mathcal I_n \}_{n \in \N}$ and $\{\mathcal I_n' \}_{n \in \N}$ of intervals in $\partial_\infty \widetilde S$. 

For $k\leq n$, the subdivision scheme enables us to decompose each double box $B\in \mathcal B_k$ into $16^{n-k}$ double boxes $B' = I_1 \times I_2 \times I_3 \times I_4$ with all $I_i \in \mathcal I_n$.  In particular, the side lengths of these new boxes are of order between $r^n$ and $R^n$. Let $\mathcal B_{k,n}$ be the family of double boxes so obtained. Similarly define a family $\mathcal B_{k,n}'$ by subdividing each double box $B'\in \mathcal B_k'$ into $16^{n-k}$ double boxes whose side lengths are of order between $r^n$ and $R^n$. Consider the families  ${\mathcal C}_{n} = \bigcup_{k=1}^n \mathcal B_{k,n}$ and ${\mathcal C}_{n}' = \bigcup_{k=1}^n \mathcal B_{k,n}'$ of all double boxes so created.

Because their lengths are of order between $r^n$ and $R^n$, each interval in $\mathcal I_n$ meets at most $\prec \frac{R^n}{r^n}$ intervals of $\mathcal I_n'$, and conversely each interval in $\mathcal I_n'$ meets $\prec \frac{R^n}{r^n}$ intervals of $\mathcal I_n$. We can therefore subdivide the  double boxes of ${\mathcal C}_{n}$ and ${\mathcal C}_{n}'$ into a common family  ${\mathcal D}_{n}$ of disjoint double boxes,  in such a way that each double box of ${\mathcal C}_{n}$ and ${\mathcal C}_{n}'$ is the union of a  number $\prec \frac{R^{4n}}{r^{4n}}$ of double boxes of~${\mathcal D}_{n}$.

We split the family ${\mathcal D}_{n}$ into three disjoint families $\mathcal D_{n}^{(0)}$, $\mathcal D_{n}^{(1)}$ and  $\mathcal D_{n}^{(2)}$, where $\mathcal D_{n}^{(0)}$ consists of those $  D \in \mathcal D_{n}$ that are contained in both $\bigcup_{ C \in \mathcal C_{n}} C$ and $\bigcup_{C' \in \mathcal C_{n}'}  C'$,  where $\mathcal D_{n}^{(1)}$ consists of those $ D$ that are contained in $\bigcup_{ C \in \mathcal C_{n}}  C$ but not in  $\bigcup_{ C' \in \mathcal C_{n}'}  C'$, and where $\mathcal D_{n}^{(2)}$ consists of those $  D $ that are contained in $\bigcup_{ C' \in \mathcal C_{n}'}  C'$ but not in  $\bigcup_{ C\in \mathcal C_{n}}  C$. 

We now use Condition~(3) of Lemma~\ref{lem:GoodBoxes}, to show that the number of elements of $\mathcal D_{n}^{(1)}$ is smaller than one could have expected. Indeed, by definition, such a double box $  D \in \mathcal D_{n}^{(1)}$ is contained in some double box $C \in \mathcal C_n$ and not in $\bigcup_{ C'\in \mathcal C_{n}'}  C' = \bigcup_{k=1}^n \bigcup_{ B'\in \mathcal B_k'}  B'$. It therefore meets some double box $ B'\in \mathcal B_{m}'$ with $m>n$. By Condition~(3) of Lemma~\ref{lem:GoodBoxes}, the double box $C$ consequently is at distance $\prec R^n$ from the complement of $\Omega$, and is therefore contained in an $\epsilon_n$--neighborhood of the boundary $\delta\Omega$ with $\epsilon_n \asymp R^n$ since its diameter is $\prec R^n$. By a volume estimate, it follows that there can be at most $\prec \frac{R^{(4-d)n}}{r^{4n}}$ such double boxes $C \in \mathcal C_n$ containing a double box $D \in \mathcal D_{n}^{(1)}$. Since each  $C \in \mathcal C_n$ contains at most $\prec \frac{R^{4n}}{r^{4n}}$ double boxes of ${\mathcal D}_{n}$, it follows that the number of elements of $\mathcal D_{n}^{(1)}$ is of order at most $\frac{R^{4n}}{r^{4n}} \frac{R^{(4-d)n}}{r^{4n}} = \frac{R^{(8-d)n}}{r^{8n}}$

The same argument shows that $ \mathcal D_{n}^{(2)}$ has $ \prec\frac{R^{(8-d)n}}{r^{8n}}$ elements. 

We are now ready to complete the proof of Lemma~\ref{lem:IntIndeptBoxes}. To show that the two families of double boxes $\mathcal B$ and $\mathcal B'$ give the same value for $i_\Omega(\alpha,\beta)$, we need to prove that $\sum_{B \in \mathcal B} \alpha \times \beta (B) = \sum_{ B' \in \mathcal B'} \alpha \times \beta( B')$, and therefore that
$$
\sum_{n=1}^\infty \sum_{ B \in \mathcal B_n} \alpha \times \beta( B) 
=\sum_{n=1}^\infty \sum_{ B' \in \mathcal B_n'} \alpha \times \beta( B').
$$ 
Consider a partial sum of the first series. By finite additivity of $\alpha $ and $\beta$, 
\begin{align*}
\sum_{k=1}^n \sum_{ B \in \mathcal B_k} \alpha \times \beta( B) 
&=  \sum_{ C \in \mathcal C_n} \alpha \times \beta ( C) \\
 &=  \sum_{ D \in \mathcal D_n^{(0)}} \alpha \times \beta( D)  +  \sum_{ D \in \mathcal D_n^{(1)}} \alpha \times \beta( D) 
\end{align*}
since every double box of $\mathcal B_k$ is the union of finitely many boxes of $\mathcal C_n$, and since every double box of $\mathcal C_n$ is the union of finitely many boxes of $\mathcal D_n^{(0)}$ and $\mathcal D_n^{(1)}$. 

Similarly,
\begin{align*}
\sum_{k=1}^n \sum_{ B' \in \mathcal B_k'} \alpha \times \beta ( B') 
&=  \sum_{ C' \in \mathcal C_n'} \alpha \times \beta ( C') \\
 &=  \sum_{ D \in \mathcal D_n^{(0)}} \alpha \times \beta ( D)  +  \sum_{ D \in \mathcal D_n^{(2)}} \alpha \times \beta ( D) .
\end{align*}

Taking the difference between these partial sums,
\begin{align*}
\biggl|
\sum_{k=1}^n \sum_{ B \in \mathcal B_k} \alpha \times \beta( B) 
- \sum_{k=1}^n \sum_{ B' \in \mathcal B_k'} & \alpha \times \beta( B') 
\biggr|\\
&\leq   \sum_{ D \in \mathcal D_n^{(1)}} \bigl| \alpha \times \beta( D)\bigr|  +  \sum_{ D \in \mathcal D_n^{(2)}} \bigl| \alpha \times \beta( D) \bigr|\\
&\prec  \frac{R^{(8-d)n}}{r^{8n}}R^{4\nu n} =\left( R^{8-d+4\nu} r^{-8}  \right)^n
\end{align*}
since each $ D =  B_1 \times  B_2 \in \mathcal D_n$ has side lengths $\prec R^n$, so that $\alpha(B_1) \prec L_{m_0}(B_1)^\nu \prec R^{2\nu n}$ and a similar estimate holds for $\beta(B_2)$. 

Letting $n$ go to infinity and using the hypothesis that $\nu > 2\left( \frac{\log r}{\log R} -1  \right) + \frac d4$, we conclude that 
$$
\sum_{k=1}^\infty \sum_{ B \in \mathcal B_k} \alpha \times \beta( B) 
=\sum_{k=1}^\infty \sum_{ B' \in \mathcal B_k'} \alpha \times \beta( B')
$$ 
as required. 
\end{proof}

We are now going to apply this to a specific domain $\Omega$. 

\begin{lem}
\label{lem:FundDomain}
For every complex structure $m_0$ on $S$ and every $d>3$, there exists an open subset $\Omega \subset DG(\widetilde S)$ such that
\begin{enumerate}
\item $\Omega$ is relatively compact in $G(\widetilde S) \times G(\widetilde S)$; 
\item the quotient map $DG(\widetilde S) \to DG(S)=DG(\widetilde S)/\pi_1(S)$ is injective on $\Omega$, and sends $\Omega$ to an open dense set of $DG(S)$;
\item there exists finitely many $\gamma \in \pi_1(S)$ such that $\Omega \cap \gamma(\Omega) \not = \varnothing$;
\item the frontier $\delta \Omega$ of $\Omega$ in $G(\widetilde S) \times G(\widetilde S)$  has Minkowski $m_0$--dimension $<d$.
\end{enumerate}
\end{lem}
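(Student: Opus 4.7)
My plan is to take $\Omega$ to be the set of pairs of geodesics whose intersection point lies in the interior of a fundamental polygon for the action of $\pi_1(S)$ on $\widetilde S$. More precisely, equip $\widetilde S$ with the hyperbolic metric coming from $m_0$ and choose a Dirichlet fundamental polygon $\widetilde P \subset \widetilde S$ based at $\widetilde x_0$, so that $\widetilde P$ is a compact convex polygon with finitely many geodesic sides, $\gamma(\mathrm{int}\,\widetilde P) \cap \mathrm{int}\,\widetilde P = \varnothing$ for every $\gamma \neq 1$, and $\pi_1(S) \cdot \overline{\widetilde P} = \widetilde S$. Define
$$
\Omega := \bigl\{ (g,h) \in DG(\widetilde S) : g \cap h \in \mathrm{int}(\widetilde P) \bigr\}.
$$
In the description of $DG(\widetilde S)$ as triples $(\widetilde x, v, w)$, this is exactly the open subset where $\widetilde x \in \mathrm{int}(\widetilde P)$.

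Conditions (1)--(3) follow from standard properties of fundamental polygons. For (1), each geodesic of $G(\widetilde S)$ crossing $\widetilde P$ has its two endpoints at angular distance bounded below as seen from $\widetilde x_0$ (the distance from $\widetilde x_0$ to such a geodesic is finite), so the set of such geodesics is compact in $G(\widetilde S)$, and $\Omega$ sits inside its square. For (3), if $(g,h) \in \Omega \cap \gamma(\Omega)$ then $g \cap h$ lies in $\mathrm{int}\,\widetilde P \cap \gamma(\mathrm{int}\,\widetilde P)$, forcing $\gamma = 1$; in particular the quotient map $DG(\widetilde S) \to DG(S)$ is injective on $\Omega$. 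The image of $\Omega$ in $DG(S)$ misses only the triples $(x, v, w)$ whose point $x$ lies in the projection of $\partial \widetilde P$, which is a codimension-one subset of $S$; hence the image is dense, completing (2).

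The heart of the proof is (4). Consider a sequence $(g_n, h_n) \in \Omega$ converging to $(g, h) \in G(\widetilde S) \times G(\widetilde S)$. The intersection points $g_n \cap h_n$ lie in the compact set $\overline{\widetilde P}$ and so, up to subsequence, converge to some $x \in \overline{\widetilde P}$. Either $g = h$ (and the limit lies on the diagonal with $g$ through $x$), or $g$ and $h$ are distinct geodesics both through $x$, whence $(g,h) \in DG(\widetilde S)$ with $g \cap h = x \in \overline{\widetilde P}$. This yields a decomposition $\delta\Omega = \delta_1 \cup \delta_2$, where $\delta_1$ consists of the transverse pairs $(g,h)$ with $g \cap h \in \partial \widetilde P$ (the alternative $x \in \mathrm{int}\,\widetilde P$ would put $(g,h)$ back into $\Omega$) and $\delta_2$ is the set of diagonal pairs $(g,g)$ with $g \cap \overline{\widetilde P} \neq \varnothing$. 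The piece $\delta_1$ is parametrized by a point of the $1$-dimensional curve $\partial \widetilde P$ together with two tangent directions at that point (for $g$ and for $h$), so it is a finite union of piecewise smooth $3$-dimensional submanifolds of the $4$-dimensional ambient $G(\widetilde S) \times G(\widetilde S)$; the piece $\delta_2$ lies in the $2$-dimensional diagonal and is parametrized by the $2$-dimensional family of geodesics meeting $\overline{\widetilde P}$.

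A standard volume estimate in the smooth Riemannian $4$-manifold $G(\widetilde S) \times G(\widetilde S)$ gives that the $\epsilon$-neighborhood of a compact piecewise smooth $k$-dimensional submanifold has volume $\prec \epsilon^{4-k}$, so $\mathrm{vol}(U_\epsilon(\delta\Omega)) \prec \epsilon$; since $\epsilon \leq \epsilon^{4-d}$ for $0 < \epsilon \leq 1$ and $d > 3$, the Minkowski $m_0$-dimension of $\delta\Omega$ is at most $3 < d$, which proves (4). The main obstacle I expect in making this rigorous is the uniform Lipschitz control of $\delta_1$ near the finitely many corners of $\widetilde P$ and along the locus where $\delta_2$ meets the topological boundary of $DG(\widetilde S)$ in $G(\widetilde S) \times G(\widetilde S)$; however, both singular loci are of codimension at least $2$, so their $\epsilon$-neighborhoods contribute only $\prec \epsilon^2$ and do not affect the dominant estimate.
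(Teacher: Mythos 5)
Your construction is exactly the paper's: the paper also takes $\Omega$ to be the preimage of the interior of a compact fundamental polygon under the intersection-point projection $p\col DG(\widetilde S)\to\widetilde S$, and identifies $\delta\Omega$ as the union of $p^{-1}(\partial\widetilde P)$ with the diagonal pairs $(g,g)$ for $g$ meeting the polygon, concluding Minkowski dimension $3<d$. Your argument is correct and simply fills in, with more detail, the sequence argument for the frontier decomposition and the volume estimate that the paper leaves implicit.
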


Namely, $\Omega$ is a fundamental domain for the action of $\pi_1(S)$ on $DG(\widetilde S)$, whose frontier has small dimension. 

\begin{proof}
The complex structure $m_0$ defines a projection $p \col DG(\widetilde S) \to \widetilde S$, which to a double geodesic $(g,h) \in DG(\widetilde S)$ associates the intersection point $g\cap h \in \widetilde S$. (Note that this projection map depends on $m_0$). 

Let $\omega$ be a compact fundamental polygon for the action of $\pi_1(S)$ on the universal cover $\widetilde S$, bounded by a piecewise differentiable curve. Let $\Omega$ be the preimage of the interior of $\omega$ under $p$. The first three conclusions of the statement clearly hold. 

The frontier $\delta \Omega$ of $\Omega$ in $G(\widetilde S) \times G(\widetilde S)$ is the union of $p^{-1}(\delta \omega)$ and of the subset of the diagonal consisting of those $(h,h)\in G(\widetilde S) \times G(\widetilde S)$ such that $h$ meets $\omega$. Since the boundary of $\omega$ is piecewise differentiable, it follows that the frontier $\delta\Omega$ has Minkowski  $m_0$--dimension $3$, which is less than $d$ by hypothesis. 
\end{proof}

\begin{lem}
\label{lem:IntIndeptDomain}
Given numbers $d>3$ and $0<r\leq \frac12 \leq R<1$, consider two cross-ratios $\alpha $, $\beta \in \mathcal X(S)$  that are $\nu$--H\"older regular with respect to $m_0$ for  $\nu > \max \{  \frac d4  \frac{\log r}{\log R}, 2\left( \frac{\log r}{\log R} -1  \right) + \frac d4\}$. Let $\Omega\subset DG(\widetilde S)$ be provided by Lemma~{\upshape\ref{lem:FundDomain}}, and consider the sum
$$
\mathrm i_{\Omega} (\alpha, \beta) = \sum_{B \in \mathcal B} \alpha \times \beta(B)
$$
as in Lemmas~{\upshape \ref{lem:IntersectionConverges}} and {\upshape \ref{lem:IntIndeptBoxes}}. This number $\mathrm i_{\Omega} (\alpha, \beta)$
is independent of the choice of  $\Omega$.
\end{lem}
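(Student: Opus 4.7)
The plan is to use the $\pi_1(S)$-invariance of $\alpha$ and $\beta$ together with a finite additivity property for $\mathrm i$ to show that $\mathrm i_\Omega(\alpha,\beta)$ and $\mathrm i_{\Omega'}(\alpha,\beta)$ are both equal to a common sum of intersection numbers over finitely many pieces. First, I would verify that the set $\Gamma=\{\gamma\in\pi_1(S):\Omega\cap\gamma\Omega'\ne\varnothing\}$ is finite: both $\Omega$ and $\Omega'$ are relatively compact in $G(\widetilde S)\times G(\widetilde S)$, and the same proper-action argument that establishes Condition~(3) of Lemma~\ref{lem:FundDomain} applies equally well to the pair $(\Omega,\Omega')$. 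Setting $\Omega_\gamma=\Omega\cap\gamma\Omega'$ for $\gamma\in\Gamma$, Condition~(2) of Lemma~\ref{lem:FundDomain} applied to $\Omega'$ implies that the translates $\{\gamma\Omega'\}_{\gamma\in\pi_1(S)}$ are pairwise disjoint, so the pieces $\Omega_\gamma$ are pairwise disjoint and $\Omega\setminus\bigsqcup_{\gamma\in\Gamma}\Omega_\gamma$ is contained in $\bigcup_{\gamma\in\Gamma}\gamma(\delta\Omega')\cap\Omega$, a finite union of sets of Minkowski $m_0$-dimension $<d$. Each $\Omega_\gamma$ has frontier $\delta\Omega_\gamma\subset\delta\Omega\cup\gamma(\delta\Omega')$ of Minkowski dimension $<d$, so Lemmas~\ref{lem:IntersectionConverges} and~\ref{lem:IntIndeptBoxes} apply and define the intersection number $\mathrm i_{\Omega_\gamma}(\alpha,\beta)$.

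The central technical step is to establish the finite additivity
$$\mathrm i_\Omega(\alpha,\beta)=\sum_{\gamma\in\Gamma}\mathrm i_{\Omega_\gamma}(\alpha,\beta).$$
I would prove this by repeating the argument of Lemma~\ref{lem:IntIndeptBoxes}, comparing a subdivision scheme for $\Omega$ with the union of subdivision schemes for each $\Omega_\gamma$. After refining both to a common family $\mathcal D_n$ of disjoint double boxes at level $n$, split $\mathcal D_n$ into a bulk part (boxes contained in some single $\Omega_\gamma$, where the two partial sums agree by finite additivity of $\alpha$ and $\beta$) and a residual part (boxes meeting the interface set $\bigcup_{\gamma\in\Gamma}\delta\Omega_\gamma\cap\Omega$). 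The interface has Minkowski $m_0$-dimension $<d$ as a finite union of such sets, so the number of residual boxes at level $n$ is $\prec R^{(8-d)n}r^{-8n}$, and the H\"older bound $|\alpha\times\beta(B)|\prec R^{4\nu n}$ makes their total contribution $\prec (R^{8-d+4\nu}r^{-8})^n$, which tends to $0$ under the standing hypothesis on $\nu$.

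With the additivity in hand, I would apply the same reasoning to $\Omega'$ with the partition $\Omega'=\bigsqcup_{\gamma\in\Gamma}\gamma^{-1}\Omega_\gamma$ (again valid up to a frontier of dimension $<d$, since $\gamma^{-1}\Omega_\gamma=\Omega'\cap\gamma^{-1}\Omega$), obtaining
$$\mathrm i_{\Omega'}(\alpha,\beta)=\sum_{\gamma\in\Gamma}\mathrm i_{\gamma^{-1}\Omega_\gamma}(\alpha,\beta).$$
The $\pi_1(S)$-invariance of $\alpha$ and $\beta$ gives $\alpha\times\beta(\gamma B)=\alpha\times\beta(B)$ for every double box $B$, so translating a subdivision scheme for $\gamma^{-1}\Omega_\gamma$ by $\gamma$ shows $\mathrm i_{\gamma^{-1}\Omega_\gamma}(\alpha,\beta)=\mathrm i_{\Omega_\gamma}(\alpha,\beta)$. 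Summing over $\gamma\in\Gamma$ then produces $\mathrm i_{\Omega'}(\alpha,\beta)=\mathrm i_\Omega(\alpha,\beta)$, as required.

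The hard part will be the additivity step. The volume estimate and H\"older bound transcribe almost directly from Lemma~\ref{lem:IntIndeptBoxes}, but arranging compatible subdivision schemes for $\Omega$ and for each $\Omega_\gamma$ simultaneously requires care: one must ensure that the interval families at level $n$ are fine enough to resolve all the interface boundaries $\delta\Omega_\gamma\cap\Omega$ uniformly in $n$. The finiteness of $\Gamma$, together with the stability of Minkowski dimension under finite unions, is precisely what allows the interface $\bigcup_\gamma\delta\Omega_\gamma\cap\Omega$ to be bounded by a single set of dimension $<d$, so that the residual contribution admits the same geometric-series bound as in Lemma~\ref{lem:IntIndeptBoxes}.
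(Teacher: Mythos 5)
Your proposal is correct and follows essentially the same route as the paper: decompose $\Omega$ into the finitely many pieces $\Omega\cap\gamma(\Omega')$, prove the additivity $\mathrm i_{\Omega}(\alpha,\beta)=\sum_\gamma \mathrm i_{\Omega\cap\gamma(\Omega')}(\alpha,\beta)$ by counting the level-$n$ boxes near the frontiers (a set of Minkowski dimension $<d$) and killing their contribution with the H\"older bound, and conclude via the $\pi_1(S)$--invariance of $\alpha$ and $\beta$. The only minor difference is in the additivity step: the paper reuses the interval family $\{\mathcal I_n\}$ of the scheme for $\Omega$ to subdivide $\bigcup_\gamma\bigl(\Omega\cap\gamma(\Omega')\bigr)$, so it avoids the common-refinement comparison and only needs $R^{4-d+4\nu}r^{-4}<1$, whereas you re-run the Lemma~\ref{lem:IntIndeptBoxes} refinement argument and use the bound $R^{8-d+4\nu}r^{-8}<1$, which is likewise guaranteed by the standing hypothesis on $\nu$.
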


\begin{proof}
Let $\Omega'$ be another domain as in Lemma~\ref{lem:FundDomain}.  By Condition~(3) of  Lemma~\ref{lem:FundDomain}, there are finitely many elements $\gamma_1$, \dots, $\gamma_t \in \pi_1(S)$ such that $\Omega$ is covered by the union of the $\gamma_s(\Omega')$. Considering the domains $\Omega_s = \Omega \cap \gamma_s(\Omega')$, we first prove  that
$$
\mathrm i_{\Omega} (\alpha, \beta) = \sum_{s=1}^t \mathrm i_{\Omega_s} (\alpha, \beta) .
$$

For this, consider the subdivision scheme $\{ \mathcal I_n \}_{n \in \N}$, $\{ \mathcal B_n \}_{n \in \N}$ for $\Omega$ provided by Lemma~\ref{lem:GoodBoxes}.  

We then use the same interval family $\{ \mathcal I_n \}_{n \in \N}$ to create a subdivision scheme $\{ \mathcal B_n'' \}_{n \in \N}$ for $\Omega'' = \bigcup_{s=1}^t \Omega_s$. More precisely, inductively define the family $\mathcal B_n''$ to consist of all double boxes $B'' =I_1''\times I_2'' \times I_3''\times I_4''$ with all four $I_i''$ in $\mathcal I_n$, such that:
\begin{enumerate}
\item the double box $B''$ is contained in $\Omega''$;
\item if $I_i \in \mathcal I_{n-1}$ is the level $n-1$ interval that contains $I_i''$, the double box $B =I_1 \times I_2 \times I_3 \times I_4$ is not  contained in $\Omega''$. 
\end{enumerate} 

As in the proof of Lemma~\ref{lem:IntIndeptBoxes}, let $\mathcal C_n$ be the family of all double boxes $B=I_1 \times I_2 \times I_3 \times I_4$, with all four $I_i$ in $\mathcal I_n$, that are contained in $\Omega$, and let $\mathcal C_n'' \subset \mathcal C_n$ be similarly associated to $\Omega''$. Then, by finite additivity,
$$
\sum_{k=1}^n  \sum_{B \in \mathcal B_k}  \alpha \times \beta(B) = \sum_{C \in \mathcal C_n}  \alpha \times \beta(C)
$$
so that 
$$
i_\Omega (\alpha, \beta) = \lim_{n \to \infty}  \sum_{C \in \mathcal C_n}  \alpha \times \beta(C).
$$
Similarly, 
$$
\sum_{s=1}^t \mathrm i_{\Omega_s} (\alpha, \beta) =
i_{\Omega''} (\alpha, \beta) = \lim_{n \to \infty} \sum_{C'' \in \mathcal C_n''}  \alpha \times \beta(C'').
$$

Each double box $C\in \mathcal C_n$ has side lengths of order between $r^n$ and $R^n$, so that its contribution $\alpha \times \beta(C)$ to the above sums is bounded in absolute value by $R^{4\nu n}$. Also, the complement $\mathcal C_n - \mathcal C_n''$ consists of those $C\in \mathcal C_n$ which meet the union of the frontiers $\delta \Omega_s$. Since these frontier have Minkowski $m_0$--dimension $<d$, the now usual volume arguments show that the cardinal of  $\mathcal C_n - \mathcal C_n''$ is bounded by $\frac{R^{(4-d)n}}{r^{4n}}$. Therefore,
\begin{align*}
\biggl |
\sum_{C \in \mathcal C_n}  \alpha \times \beta(C)
- \sum_{C'' \in \mathcal C_n''}  \alpha \times \beta(C'')
\biggr |
&\leq
 \sum_{C \in \mathcal C_n - \mathcal C_n''} 
\bigl |
 \alpha \times \beta(C)
\bigr | \\
&\leq \frac{R^{(4-d)n}}{r^{4n}} R^{4\nu n}.
\end{align*}
Passing to the limit as $n \to \infty$, we conclude that 
$$
i_\Omega (\alpha, \beta) = i_{\Omega''} (\alpha, \beta) = \sum_{s=1}^t \mathrm i_{\Omega_s} (\alpha, \beta) 
$$
since $R^{4-d+4\nu}r^{-4}<1$ by our hypothesis on $\nu$. 

Considering the domains  $\gamma_s^{-1}(\Omega_s)$ in $\Omega'$, the same argument shows that
$$
\mathrm i_{\Omega'} (\alpha, \beta) = \sum_{s=1}^t \mathrm i_{\gamma_s^{-1}(\Omega_s)} (\alpha, \beta) .
$$
In addition, because the cross-ratio functions $\alpha $ and $\beta $ are invariant under the action of the fundamental group,
$$
\mathrm i_{\Omega_s} (\alpha, \beta) = \mathrm i_{\gamma_s^{-1}(\Omega_s)} (\alpha, \beta) 
$$
for every $s$. (Note that $\gamma_s$ distorts the metric induced by the complex structure $m_0$ on the circle at infinity $\partial_\infty \widetilde S$ by a uniformly bounded Lipschitz factor, so that all estimates are preserved.)

It follows that $\mathrm i_{\Omega} (\alpha, \beta) =\mathrm i_{\Omega'} (\alpha, \beta) $. 
\end{proof}

This proves Theorem~\ref{thm:Main1}, in the following form.

\begin{thm}
\label{thm:Main3}
The above construction provides a  well-defined geometric intersection number $i(\alpha, \beta)$ for any two cross-ratio functions $\alpha$, $\beta \in \mathcal X(S)$ that are $\nu$--H\"older regular with respect to the complex structure $m_0 \in \T(S)$ and for some $\nu > \frac 34$. 

\emph{A priori}, this intersection number may depend on the complex structure $m_0$ with respect to which $\alpha$ and  $\beta$ are $\nu$--H\"older regular. However, it is a locally constant function of $m_0 \in \T(S)$. 
\end{thm}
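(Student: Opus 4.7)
The well-definedness of $i(\alpha, \beta)$ for $\nu > \frac34$ follows by combining the preceding lemmas with a careful choice of the parameters $d$, $r$, $R$. The plan is to pick $d > 3$ slightly larger than $3$ and $0 < r \leq \frac12 \leq R < 1$ close enough to $\frac12$ so that both
$$
\frac d4 \cdot \frac{\log r}{\log R} \qquad \text{and} \qquad 2\Bigl(\frac{\log r}{\log R} - 1\Bigr) + \frac d4
$$
are strictly less than $\nu$. Both expressions tend to $\frac34$ as $r, R \to \frac12$ and $d \to 3$, so such a choice exists whenever $\nu > \frac34$. With these parameters fixed, Lemma~\ref{lem:FundDomain} provides a domain $\Omega \subset DG(\widetilde S)$, Lemma~\ref{lem:IntersectionConverges} gives the absolute convergence of $i_\Omega(\alpha, \beta) = \sum_{B \in \mathcal B}\alpha \times \beta(B)$, Lemma~\ref{lem:IntIndeptBoxes} gives its independence of the subdivision scheme, and Lemma~\ref{lem:IntIndeptDomain} gives its independence of $\Omega$. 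One then sets $i(\alpha, \beta) := i_\Omega(\alpha, \beta)$.

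For the local constancy statement, fix $m_0 \in \T(S)$ and consider $m_1 \in \T(S)$ close to $m_0$. By Lemma~\ref{lem:HolderStructure} there exists an exponent $\nu' \leq 1$, with $\nu' \to 1$ as $m_1 \to m_0$, comparing the metrics induced on $\partial_\infty \widetilde S$ by the two complex structures. It follows directly from the definition of H\"older regularity that $\alpha$ and $\beta$ are $\nu\nu'$--H\"older regular with respect to $m_1$, and $\nu\nu' > \frac34$ once $m_1$ is close enough to $m_0$, so that $i_{m_1}(\alpha, \beta)$ is defined by the first part.

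The heart of the argument is the equality $i_{m_0}(\alpha, \beta) = i_{m_1}(\alpha, \beta)$. I would choose a single domain $\Omega$ that satisfies the hypotheses of Lemma~\ref{lem:FundDomain} for both $m_0$ and $m_1$ simultaneously, noting that conditions~(1)--(3) of that lemma are topological and insensitive to the complex structure, while the Minkowski dimension condition~(4) persists under the bi-H\"older change of metric of Lemma~\ref{lem:HolderStructure} when $m_1$ is close enough to $m_0$. I would then take a single subdivision scheme $\{\mathcal I_n\}, \{\mathcal B_n\}$ produced by Lemma~\ref{lem:GoodBoxes} with respect to $m_0$, and observe that the same combinatorial family also meets the hypotheses of Lemma~\ref{lem:GoodBoxes} with respect to $m_1$, with slightly adjusted parameters $r_1 = r^{1/\nu'}$, $R_1 = R^{\nu'}$ and a slightly enlarged Minkowski dimension $d_1$. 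Since these perturbed quantities depend continuously on $m_1$ and reduce to $r$, $R$, $d$ at $m_1 = m_0$, the strict inequalities required by Lemmas~\ref{lem:IntersectionConverges} and~\ref{lem:IntIndeptBoxes} persist in the $m_1$ framework on a neighborhood of $m_0$. Because the formal sum $\sum_{B \in \mathcal B}\alpha \times \beta(B)$ depends only on $\alpha$, $\beta$, and the combinatorial family of boxes, it computes both $i_{m_0}(\alpha, \beta)$ and $i_{m_1}(\alpha, \beta)$, which are therefore equal.

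The main obstacle will be the quantitative continuity bookkeeping at the end: one has to verify explicitly, using the exponent $\nu'$ of Lemma~\ref{lem:HolderStructure}, that the perturbed quadruple $(r_1, R_1, d_1, \nu\nu')$ still satisfies the two inequalities of Lemmas~\ref{lem:IntersectionConverges} and~\ref{lem:IntIndeptBoxes}. This is routine in the sense that the strict inequality $\nu > \frac34$ provides the margin needed to absorb the perturbations, but it is precisely the step where local constancy -- as opposed to merely continuity -- of $m_0 \mapsto i_{m_0}(\alpha,\beta)$ is obtained.
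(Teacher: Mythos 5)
Your proposal is correct and follows essentially the same route as the paper: choose $d>3$ and $r,R$ close enough to $3$ and $\frac12$ that both thresholds (which tend to $\frac34$) drop below $\nu$, then invoke Lemmas~\ref{lem:IntersectionConverges}, \ref{lem:IntIndeptBoxes}, \ref{lem:IntIndeptDomain} with the domain of Lemma~\ref{lem:FundDomain}. Your treatment of local constancy — keeping the same combinatorial family of double boxes and checking, via the exponent $\nu'$ of Lemma~\ref{lem:HolderStructure}, that the perturbed parameters $(r_1,R_1,d_1,\nu\nu')$ still satisfy the strict inequalities — is exactly the stability argument the paper compresses into its remark that the flexibility in $r$, $R$, $d$ was designed for this purpose, only spelled out in more detail.
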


\begin{proof}
Pick numbers $d<4$ and $0<r<\frac 12 <R<1$ sufficiently close to $3$ and $\frac12$, respectively, that
$
\nu > \max \{  \frac d4  \frac{\log r}{\log R}, 2\left( \frac{\log r}{\log R} -1  \right) + \frac d4\}
$. 

Choose a domain $\Omega \subset DG(\widetilde S)$ as in Lemma~\ref{lem:FundDomain}, and consider  the family $\mathcal B = \bigcup_{n=1}^\infty \mathcal B_n$ of double boxes in $\Omega $ provided by Lemma~\ref{lem:GoodBoxes}. Then define
$$
i(\alpha, \beta) = i_\Omega (\alpha, \beta) = \sum_{B \in \mathcal B} \alpha \times \beta(B). 
$$
Lemmas~\ref{lem:IntersectionConverges}, \ref{lem:IntIndeptBoxes} and \ref{lem:IntIndeptDomain} show that this sum converges, and is independent of choices.

In addition, by Lemma~\ref{lem:HolderStructure}, the estimates are stable under small perturbation of the complex structure $m_0$. This guarantees the invariance under small perturbation of the complex structure $m_0$. (The introduction of the numbers $r$, $R$ and $d$ in the various statements  were specially designed to guarantee this.)
\end{proof}

\section{The geometric estimate}
\label{sect:GeomEst}

This section is devoted to the proof of Proposition~\ref{prop:InfLiouvRegular}, which says that infinitesimal Liouville cross-ratios are $\nu$--H\"older regular for every $\nu<1$. It will enable us to apply Theorem~\ref{thm:Main3} to infinitesimal Liouville cross-ratios.

Let $V\in T_{m_0}\mathcal T(S)$ be a tangent vector to the
Teichm\"uller space $\mathcal T(S)$ at the point $m_0$. The
infinitesimal deformation associated with $V$ is described by an
equivalence class $[\mu ]$ of Beltrami coefficients, where the Beltrami coefficients $\mu$ and $\mu'$ are equivalent if and only if
$$
\int_S\mu\varphi =\int_S\mu'\varphi
$$
for all holomorphic quadratic differentials $\varphi$ on $S$. We then define the \emph{Teichm\"uller norm} $\Vert V \Vert$ of the tangent vector $V\in T_{m_0} \mathcal T(S)$ as 
$$
\Vert V \Vert = \min_\mu \Vert \mu \Vert_\infty, 
$$
where the infimum is taken over all Beltrami differentials $\mu$ representing $V$.

\begin{lem}
\label{lem:GeomEst}
There exists a universal constant $c_0>0$ such that
$$
|L_V(I,J)|\leq c_0\,\| V\|\, L_{m_0}(I,J)\, |\log L_{m_0}(I,J)|
$$
for all intervals $I$, $J\subset \partial_{\infty}\tilde{S}$ with
disjoint closure such that $L_{m_0}(I,J)\leq \frac{1}{2}$.
\end{lem}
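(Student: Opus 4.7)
The plan is to express $L_V(I,J)$ as a linear integral functional of a Beltrami differential representing $V$, and then bound that integral by a direct geometric analysis.

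First I would represent $V \in T_{m_0}\T(S)$ by a Beltrami differential $\mu$ on $S$ with $\|\mu\|_\infty$ as close to $\|V\|$ as desired; by abuse of notation I also write $\mu$ for its $\pi_1(S)$--invariant lift to $\widetilde S$, which the complex structure $m_0$ identifies with the unit disk $\HH^2$. Solving the Beltrami equation with dilatation $t\mu$ for small $t$ produces a smoothly varying family of quasiconformal diffeomorphisms $f^t$ of $\HH^2 \cup \partial_\infty \widetilde S$, normalized to fix three prescribed boundary points so that $f^0 = \Id$ and $t\mapsto f^t$ is differentiable. The path $m_t = (f^t)^* m_0$ in $\T(S)$ is tangent to $V$ at $t=0$, and since the Liouville cross-ratio pulls back under biholomorphisms,
$$L_{m_t}(I,J) = \biggl|\log\frac{(f^t(a) - f^t(c))(f^t(b) - f^t(d))}{(f^t(a) - f^t(d))(f^t(b) - f^t(c))}\biggr|,$$
where $a,b$ and $c,d$ are the endpoints of $I$ and $J$.

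Setting $F(\xi) = \partial_t\big|_{t=0} f^t(\xi)$ and differentiating at $t=0$ gives
$$L_V(I,J) = \pm\biggl[\frac{F(a) - F(c)}{a - c} + \frac{F(b) - F(d)}{b - d} - \frac{F(a) - F(d)}{a - d} - \frac{F(b) - F(c)}{b - c}\biggr].$$
Ahlfors' integral representation writes $F(\xi) = \int_{\HH^2} \mu(w)\,R(w,\xi)\,dA(w)$ for an explicit singular kernel $R$. Substituting and combining the four difference quotients into a single antisymmetric kernel $K(w;a,b,c,d)$ yields
$$|L_V(I,J)| \leq \|\mu\|_\infty \int_{\HH^2} |K(w;a,b,c,d)|\,dA(w).$$

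The main obstacle is then the geometric estimate
$$\int_{\HH^2} |K(w;a,b,c,d)|\,dA(w) \prec L_{m_0}(I,J)\,\bigl|\log L_{m_0}(I,J)\bigr|.$$
Each individual derivative-kernel $\partial_\xi R(w,\xi)$ has a non-integrable $|w - \xi|^{-2}$ singularity, so the estimate depends entirely on the cancellation between the four terms. After a Möbius normalization that preserves $L_{m_0}$, I can place the endpoints in a standard configuration in which $L_{m_0}(I,J) \asymp \varepsilon$ is small, and check that the combined kernel $K$ is pointwise of size $O(\varepsilon)$ away from the endpoints and of size $\varepsilon\,|w-\xi_j|^{-1}$ near each endpoint $\xi_j \in \{a,b,c,d\}$. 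Splitting $\HH^2$ into a bulk region at bounded hyperbolic distance from the geodesics joining $I$ to $J$ and four horoball neighborhoods of the endpoints, the bulk contribution is $O(\varepsilon)$ while each horoball contributes $O(\varepsilon|\log\varepsilon|)$; the logarithmic factor is thus forced by the Cauchy-type singularities of $K$ at the boundary. Letting $\|\mu\|_\infty \to \|V\|$ then yields the claimed universal constant $c_0$.
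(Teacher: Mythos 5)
Your proposal follows essentially the same route as the paper: represent $V$ by a near-extremal Beltrami coefficient, differentiate the cross-ratio under the quasiconformal flow to get the first-variation formula $L_V(I,J)=-\frac2\pi\,\mathrm{Re}\int_{\HH^2}\mu(z)\frac{(a-b)(c-d)}{(z-a)(z-b)(z-c)(z-d)}\,dx\,dy$ (the paper quotes this from Wolpert rather than re-deriving it from Ahlfors' representation of $\partial_t f^t$), normalize by a M\"obius map, and bound the kernel integral by $L_{m_0}(I,J)\,|\log L_{m_0}(I,J)|$, which the paper does by citing the computation in \v Sari\'c's earlier paper. One small correction to your kernel analysis: near an endpoint the combined kernel is not of size $\varepsilon|w-\xi_j|^{-1}$ (a single simple pole, which would integrate to $O(\varepsilon)$ with no logarithm); the logarithm arises because the two endpoints of each short interval are at distance $\asymp\varepsilon^{1/2}$ from each other, so in the intermediate annulus the kernel is $\asymp\varepsilon|w-\xi_j|^{-2}$ and integrating between radii $\asymp\varepsilon^{1/2}$ and $O(1)$ produces the factor $|\log\varepsilon|$ — your per-region totals and final bound are nevertheless correct.
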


To simplify the notation, set $L= L_{m_0}(I,J)$. 
\begin{proof} 
 For the complex structure $m_0$, biholomorphically identify the universal cover $\widetilde S$ to  the upper half-plane $\mathbb H^2 \subset \C$. By invariance of the requested estimate under biholomorphic transforms of $\mathbb H^2$, we can further arrange that $I=[- \mathrm e^{\alpha},-1]$ and
$J=[1,\mathrm e^{\alpha}]$ for some $\alpha>0$. An  easy computation then shows that
$L_{m_0}(I,J) = 2 \log \bigl( \cosh \frac\alpha2 \bigr) \asymp \alpha^2$.

The first variation formula for the solution of the Beltrami
equation with coefficient $t\mu$ yields the formula (see
\cite{Wolpert1})
$$
L_V([a,b],[c,d])=-\frac{2}{\pi}Re\int_{\mathbb H^2}\mu
(z)\frac{(a-b)(c-d)}{(z-a)(z-b)(z-c)(z-d)}dx\,dy
$$
where $z=x+iy$.

In our situation, this provides a bound
$$
|L_V(I,J)|\prec  \Vert\mu \Vert_\infty \alpha^2 \int_{\mathbb
H^2}\frac{dx\,dy}{|(z+1)(z+e^\alpha)(z-1)(z-e^{\alpha})|},
$$
 since the intervals $I$ and $J$ have length $\asymp \alpha$.

An integration along the lines of \cite[page 448]{Saric1} then gives
$$
\int_{\mathbb
H^2}\frac{dxdy}{|(z+1)(z+e^{\alpha})(z-1)(z-e^{\alpha})|}\prec  |\log \alpha\,|  \asymp  |\log  L_{m_0}(I,J)\,|
$$

Combining these two inequalities, taking the infimum over all Beltrami coefficients $\mu$ representing $V$, and observing that the constants hidden in the symbols $\prec$ and $\asymp$ are universal  completes the proof. 
\end{proof}

Lemma~\ref{lem:GeomEst} proves the H\"older regularity property of Proposition~\ref{prop:InfLiouvRegular}.


\section{Proof of Theorem~\ref{thm:Main2}}

We restate Theorem~\ref{thm:Main2} for the convenience of the reader. 
\begin{thm}
Let $t\mapsto m_t$ and $u \mapsto n_u$ be two differentiable curves in $\T(S)$, respectively tangent to the vectors $V$ and $W\in T_{m_0}\T(S)$ at $m_0=n_0$. Consider the associated Liouville geodesic currents $L_{m_t}$ and $L_{n_u}$, and the infinitesimal Liouville currents $L_V=\frac d{d t} L_{m_t}{}_{|t=0}$ and $L_W=\frac d{d u} L_{n_u}{}_{|u=0}$. Then,
$$
\frac\partial{\partial t} \frac\partial{\partial u} i(L_{m_t}, L_{n_u})_{|(t,u)=(0,0)} = i(L_V, L_W),
$$
where $i(L_V, L_W)$ is the geometric intersection number provided by Theorem~{\upshape\ref{thm:Main1}} and Proposition~{\upshape\ref{prop:InfLiouvRegular}}, and where $i(L_{m_t}, L_{n_u})$ is the classical intersection number of measure geodesic currents as in \cite{Bonahon1}. 
\end{thm}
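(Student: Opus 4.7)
The plan is to express both sides of the equality as a common box-sum, and then differentiate term by term via dominated convergence. Fix a fundamental domain $\Omega \subset DG(\widetilde S)$ as in Lemma~\ref{lem:FundDomain} and a family of double boxes $\mathcal{B} = \bigcup_n \mathcal{B}_n$ as in Lemma~\ref{lem:GoodBoxes}. For $(t, u)$ in a compact neighborhood of $(0, 0)$, the Teichm\"uller norms $\|\dot m_t\|$ and $\|\dot n_u\|$ remain bounded, and Lemma~\ref{lem:HolderStructure} shows that the metric on $\partial_\infty\widetilde S$ induced by $m_t$ (resp.\ $n_u$) is bi-H\"older equivalent to that of $m_0$ with exponent tending to $1$ as $t, u \to 0$. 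This supplies uniform $\nu$--H\"older regularity of $L_{m_t}$ and $L_{n_u}$ with respect to $m_0$ for any preassigned $\nu < 1$. Applying Theorem~\ref{thm:Main3}, and using the standard observation that for positive measure currents the absolutely convergent box-sum agrees with Bonahon's classical intersection pairing, one obtains
$$
i(L_{m_t}, L_{n_u}) = \sum_{B \in \mathcal{B}} L_{m_t}(B_1) \, L_{n_u}(B_2).
$$

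I then differentiate the series twice, one variable at a time. For the first step, fix $t$ small and consider the difference quotient
$$
\frac{L_{n_u}(B_2) - L_{n_0}(B_2)}{u},
$$
which, $L_{n_u}(B_2)$ being a real-valued differentiable function of $u$, equals $L_{\dot n_{v}}(B_2)$ for some $v$ between $0$ and $u$ by the Mean Value Theorem. Lemma~\ref{lem:GeomEst}, applied with base complex structure $n_v$, bounds this by $\prec L_{n_v}(B_2) \bigl| \log L_{n_v}(B_2) \bigr|$, and Lemma~\ref{lem:HolderStructure} converts the right-hand side into $\prec L_{m_0}(B_2)^{\nu'}$ for any $\nu' < 1$, the $|\log|$ factor being absorbed into a slightly smaller exponent. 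Paired with the uniform bound $|L_{m_t}(B_1)| \prec L_{m_0}(B_1)^{\nu'}$, this yields the dominant $L_{m_0}(B_1)^{\nu'} L_{m_0}(B_2)^{\nu'}$, which is summable over $\mathcal{B}$ by the estimate of Lemma~\ref{lem:IntersectionConverges} provided $\nu' > 3/4$. Dominated convergence then gives
$$
\partial_u \, i(L_{m_t}, L_{n_u})\big|_{u=0} = \sum_{B \in \mathcal{B}} L_{m_t}(B_1) \, L_W(B_2).
$$

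Differentiating this expression once more in $t$ at $t = 0$ proceeds by exactly the same scheme, with the factor $L_{n_u}(B_2)$ now replaced by $L_W(B_2)$, which is itself $\nu'$--H\"older regular with respect to $m_0$ by Proposition~\ref{prop:InfLiouvRegular}. The difference quotient of $L_{m_t}(B_1)$ is uniformly bounded by $\prec L_{m_0}(B_1)^{\nu'}$ exactly as before, so dominated convergence again applies and produces
$$
\partial_t \partial_u \, i(L_{m_t}, L_{n_u})\big|_{(t,u)=(0,0)} = \sum_{B \in \mathcal{B}} L_V(B_1) \, L_W(B_2) = i(L_V, L_W),
$$
the last equality being the definition of $i(L_V, L_W)$ supplied by Theorem~\ref{thm:Main3}.

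The main obstacle is securing the uniform $\nu'$--H\"older bound on the difference quotient of $L_{m_t}(B_1)$ with respect to the \emph{fixed} base structure $m_0$, rather than the varying $m_s$ that naturally appears in Lemma~\ref{lem:GeomEst}. This is where Lemma~\ref{lem:HolderStructure}, the uniform boundedness of the Teichm\"uller norm along the curves $m_t$ and $n_u$, and the latitude $\nu' < 1$ permitted by Proposition~\ref{prop:InfLiouvRegular} must all be combined; the logarithmic loss in Lemma~\ref{lem:GeomEst} is precisely what forces one to give up an arbitrarily small amount of H\"older exponent. Once this uniformity is established, the rest of the argument is routine term-by-term differentiation.
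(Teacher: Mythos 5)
Your proposal is correct and follows essentially the same route as the paper: both identify $i(L_{m_t}, L_{n_u})$ with the box-sum over the fixed subdivision $\mathcal B$ of a fundamental domain and then differentiate term by term, justified by uniform $\nu$--H\"older regularity of the currents and their derivatives (via Lemma~\ref{lem:HolderStructure}, Proposition~\ref{prop:InfLiouvRegular}/Lemma~\ref{lem:GeomEst}) together with the summability estimate of Lemma~\ref{lem:IntersectionConverges}. Your use of the Mean Value Theorem plus dominated convergence is simply a more explicit rendering of the paper's appeal to ``uniform convergence estimates,'' not a different argument.
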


\begin{proof}
As in the construction of intersection numbers in the proof of Theorem~\ref{thm:Main2},  pick numbers $d>3$, $0<r<\frac12 <R<1$ and $\nu<1$ such that $
\nu > \max \{  \frac d4  \frac{\log r}{\log R}, 2\left( \frac{\log r}{\log R} -1  \right) + \frac d4\}
$. For every $\alpha$, $\beta\in \mathcal X(S)$ that are $\nu$--H\"older regular with respect to $m_0$, we can then define the intersection number 
$$
i(\alpha, \beta) = \sum_{B \in \mathcal B} \alpha \times \beta(B)
$$
as in the proof of Theorem~\ref{thm:Main2}. 

Lemma~\ref{lem:HolderStructure} and Proposition~\ref{prop:InfLiouvRegular} show that, for $t$, $t'$, $u$, $u'$ sufficiently close to $0$, the cross-ratio functions $L_{m_t}$, $L_{n_u}$, $\frac\partial {\partial t} L_{m_t}$ and $\frac\partial {\partial u} L_{n_u}$ are $\nu$--H\"older regular with respect to the complex structures $m_{t'}$ and $n_{u'}$. Similarly, the family $\mathcal B = \bigcup_{i=1}^\infty \mathcal B_n$ of double boxes satisfy the size estimates of Lemma~\ref{lem:GoodBoxes} with respect to the complex structures $m_{t'}$ and $n_{u'}$.

For a given double box $B = B_1 \times B_2 \in \mathcal B$, with $B_1$, $B_2 \subset G(\widetilde S)$, write
\begin{align*}
\frac\partial {\partial u} \bigl( L_{m_t}\times L_{n_u} (B)\bigr)  &= \frac\partial {\partial u} \bigl( L_{m_t}(B_1) \times L_{n_u} (B_2) \bigr)
=  L_{m_t}(B_1) \times \biggl ( \frac\partial {\partial u} L_{n_u} (B_2) \biggr) \\
&= L_{m_t} \times  \biggl ( \frac\partial {\partial u} L_{n_u}\biggr)  (B) 
\end{align*}
Summing over all boxes $B\in \mathcal B$ and using the fact that all convergence estimates are uniform in $t$, 
$$
\frac\partial {\partial u} i( L_{m_t}, L_{n_u}) = \sum_{B \in \mathcal B} L_{m_t} \times  \biggl( \frac\partial {\partial u} L_{n_u}  \biggr) (B)
= i \biggl(  L_{m_t} ,  \frac\partial {\partial u} L_{n_u} \biggl).
$$

Iterating this argument (and again using the uniform convergence estimates) then gives
$$
\frac\partial {\partial t}\frac\partial {\partial u} i( L_{m_t}, L_{n_u}) = \sum_{B \in \mathcal B}  \biggl( \frac\partial {\partial t} L_{m_t} \biggr) \times  \biggl( \frac\partial {\partial u} L_{n_u}  \biggr) (B)
= i \biggl(  \frac\partial {\partial t}  L_{m_t} ,  \frac\partial {\partial u} L_{n_u} \biggl)
$$
for every $(t,u)$ sufficiently close to $(0,0)$, and in particular for $(t,u)=(0,0)$.
\end{proof}

\end{document}